\definecolor{darkred}{rgb}{1,0,0}
\definecolor{darkgreen}{rgb}{0,0.8,0}
\definecolor{darkblue}{rgb}{0,0,1}
\numberwithin{equation}{section}
\theoremstyle{plain}
\newtheorem{theorem}{Theorem}
\numberwithin{theorem}{section}
\newtheorem{proposition}[theorem]{Proposition}
\newtheorem{lemma}[theorem]{Lemma}
\newtheorem{corollary}[theorem]{Corollary}
\theoremstyle{definition}
\newtheorem{remark}[theorem]{Remark}
\newtheorem{definition}[theorem]{Definition}
\newtheorem{example}[theorem]{Example}
  \newcommand{\re}{\mathbb{R}}
\newcommand{\boxx}{\rule{2.12mm}{3.43mm}}
\title[Unique continuation for a gradient inequality]{Unique continuation for a gradient inequality with $L^n$ potential}
\author{Adam Coffman, \ \ Yifei Pan,\ \  and \ \  Yuan Zhang}
\address{Department of Mathematical Sciences, Purdue University Fort Wayne, 2101 East Coliseum Boulevard, Fort Wayne, IN 46805, USA}
\email{coffmana@pfw.edu, pan@pfw.edu, zhangyu@pfw.edu}
\begin{document}

 \begin{abstract}
We establish a unique continuation property for solutions of the differential inequality $|\nabla u|\leq V|u|$, where $V$ is locally $L^n$ integrable on a domain in $\mathbb R^n$. A stronger uniqueness result is obtained if in addition the solutions are locally Lipschitz. One application is a finite order vanishing property in the $L^2$ sense for the exponential of $W^{1,n}$ functions. We further discuss related results for the Cauchy-Riemann operator $\bar\partial$ and characterize the vanishing order for smooth extension of holomorphic functions across the boundary.  
\end{abstract}
\subjclass[2020]{Primary 35R45; Secondary  26B35, 32A40, 35A02.}
\keywords{unique continuation, differential inequalities, divergent integrals.}

\maketitle

\section{Introduction and Results}

Let $\Omega$ be a connected open subset of $\mathbb R^n$.  We investigate   solutions to the following differential inequality concerning the gradient operator $\nabla$:
\begin{equation}\label{eq}
     |\nabla u| \leq V |u | \ \ \text{on}\ \ \Omega,
\end{equation}
with the potential $V\in L^n_{loc}(\Omega)$. 

A    function $u\in L^2_{loc}(\Omega)$ is said to vanish to infinite order (or to be {\underline{flat}})  at a point $x_0\in \Omega$  (in the $L^2$ sense) means that for all $m\ge 0$,
\begin{equation*}\label{flat}
    \lim_{r\rightarrow 0} r^{-m}\int_{|x-x_0|<r}|u(x)|^2 dv  =0,
\end{equation*}
where $dv$  is the Lebesgue  measure element in $\mathbb R^n$. Otherwise,  $u$ vanishes to finite order at $x_0$ in the $L^2$ sense.  We say a differential (in)equality  satisfies the (strong) unique continuation property to mean that every $H^1_{loc}(\Omega)\left(= W^{1,2}_{loc}(\Omega)\right)$ solution that vanishes to infinite order at a point in the $L^2$ sense must vanish identically. Here for  $p\ge 1$, $W^{1,p}_{loc}(\Omega)$ is the standard Sobolev space of  $L^p_{loc}(\Omega)$ functions whose first order weak derivatives are represented by functions in $L_{loc}^p(\Omega)$.  While studying  the unique continuation property of the Cauchy-Riemann operator $\bar\partial$ in several complex variables: \begin{equation}\label{eqc}
    |\bar\partial u| \le V|u|\ \ \ \text{on}\ \ \Omega\subseteq\mathbb C^n,   
\end{equation} we observe that  \eqref{eqc} is  reduced to \eqref{eq}   when the solutions are real-valued. This motivates us to study the following unique continuation property of $H^1_{loc}(\Omega) $ solutions to \eqref{eq}.

\begin{theorem}\label{main}
Let $\Omega$ be a  domain in $\mathbb R^n, n\ge 2$ and $V\in L_{loc}^n(\Omega)$. Suppose $u=(u_1, \ldots, u_M): \Omega\rightarrow \mathbb R^M$ with $u\in H^1_{loc}(\Omega)$  and satisfies $ |\nabla u|\le V|u|$ a.e.\ on $\Omega$.  If $u$ vanishes to infinite order at some $x_0\in \Omega$, then $u\equiv 0$.
\end{theorem}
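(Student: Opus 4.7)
The plan is to prove Theorem~\ref{main} by a Carleman-type weighted Sobolev estimate combined with H\"older absorption of the $L^n$ potential. As a preliminary reduction, I would first replace the problem by its local version: \emph{if $u$ vanishes to infinite order in $L^2$ at $x_0$ and $\|V\|_{L^n(B_R(x_0))}$ is sufficiently small, then $u\equiv 0$ a.e.\ on a neighborhood of $x_0$}. The global conclusion then follows from a standard connectedness argument: the set $U=\{x\in\Omega : u\equiv 0 \text{ a.e.\ on a neighborhood of }x\}$ is open by definition, nonempty by the local version at $x_0$ (possible because $V\in L^n_{loc}$ makes $\|V\|_{L^n(B_R(y))}$ small for small $R$ at any $y$), and closed in $\Omega$ since the local vanishing radius is uniform in the ambient $L^n$-data of $V$. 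The vector-valued case reduces to the scalar case by replacing $u$ with $|u|$, which lies in $H^1_{loc}$ and satisfies $|\nabla|u||\leq |\nabla u|\leq V|u|$, with the same infinite-order vanishing hypothesis.

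After translating $x_0=0$, the technical heart of the argument is a weighted Sobolev inequality of the form
\[
\||x|^{-\tau-1} v\|_{L^{p^*}(B_R)} \leq C\,\||x|^{-\tau}\nabla v\|_{L^p(B_R)},\qquad v\in C_c^\infty(B_R\setminus\{0\}),
\]
valid for $\tau>0$ outside a discrete set and with $C$ independent of $\tau$, where $p$ and $p^*$ satisfy the Sobolev scaling $\tfrac1p-\tfrac1{p^*}=\tfrac1n$ (for concreteness take $p=\tfrac{2n}{n+2}$, $p^*=2$). This is derived by conjugating $\nabla$ with the weight $|x|^{-\tau}$ and invoking the standard Sobolev embedding on the cylinder $\mathbb R\times S^{n-1}$ via the log-radial change $s=-\log|x|$; the excluded $\tau$ correspond to resonances between the conjugation shift and spherical-harmonic eigenvalues, but form a bounded set and cause no obstruction. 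I would then apply the estimate to $v=\chi_R\psi_\epsilon u$, where $\chi_R$ is an outer cutoff supported in $B_R$ with $\chi_R\equiv 1$ on $B_{R/2}$, and $\psi_\epsilon$ is an inner cutoff vanishing on $B_\epsilon$ and equal to $1$ outside $B_{2\epsilon}$. The Leibniz rule produces three terms on the right. The main term is controlled via $|\nabla u|\leq V|u|$ and H\"older with exponents $n$ and $p^*$:
\[
\||x|^{-\tau}V\chi_R\psi_\epsilon u\|_{L^p}\leq \|V\|_{L^n(B_R)}\||x|^{-\tau}\chi_R\psi_\epsilon u\|_{L^{p^*}}\leq R\|V\|_{L^n(B_R)}\||x|^{-\tau-1}v\|_{L^{p^*}},
\]
and is absorbed into the left once $R$ is chosen small. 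The inner cutoff error on $B_{2\epsilon}\setminus B_\epsilon$ is bounded by $C\epsilon^{-\tau-1}\|u\|_{L^p(B_{2\epsilon})}$ and tends to $0$ as $\epsilon\to 0$, because the $L^2$ infinite-order vanishing of $u$ transfers to $L^p$ via H\"older (since $p<2$). After sending $\epsilon\to 0$, I restrict the left to $B_{R/4}$ where $|x|^{-\tau-1}\geq 4^{\tau+1}R^{-\tau-1}$, and bound the remaining outer cutoff error on $B_R\setminus B_{R/2}$ by $C\,2^\tau R^{-\tau-1}\|u\|_{L^p(B_R)}$. Cancelling $R^{-\tau-1}$ leaves an inequality of the form $2^\tau\|u\|_{L^{p^*}(B_{R/4})}\leq C_0\|u\|_{L^p(B_R)}$ with $C_0$ independent of $\tau$; letting $\tau\to\infty$ forces $u\equiv 0$ on $B_{R/4}$, completing the local version.

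The main obstacle is establishing the Carleman estimate with constant uniform in $\tau$. The Sobolev scaling $\tfrac1p-\tfrac1{p^*}=\tfrac1n$ matches the $L^n$-hypothesis on $V$ exactly, leaving no slack; any slight mismatch would either prevent absorption of the potential or break the exponential gain in $\tau$. A secondary difficulty is the careful ordering of the limits ($\epsilon\to 0$ before $\tau\to\infty$), which depends essentially on the infinite-order vanishing assumption to make the inner cutoff error disappear before the weight amplification in $\tau$ takes over.
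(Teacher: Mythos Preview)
Your overall architecture---power-weight Carleman inequality, H\"older absorption of the $L^n$ potential, inner/outer cutoffs, weight comparison, then sending the weight parameter to infinity---matches the paper's proof. However, the specific Carleman estimate you write down is false. Your inequality
\[
\||x|^{-\tau-1} v\|_{L^{p^*}(B_R)} \leq C\,\||x|^{-\tau}\nabla v\|_{L^p(B_R)}
\]
is not scale-invariant: with $1/p-1/p^*=1/n$ one has $n/p^*-n/p=-1$, so a radial bump supported in $\{a<|x|<2a\}$ makes the left side behave like $a^{-\tau-1+n/p^*}$ and the right side like $a^{-\tau-1+n/p}$, giving a ratio $\sim a^{-1}\to\infty$ as $a\to 0$. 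No uniform constant can exist. The scale-invariant (and true) version carries the \emph{same} weight on both sides,
\[
\||x|^{-\tau} v\|_{L^{p^*}} \leq C\,\||x|^{-\tau}\nabla v\|_{L^p},
\]
and this is exactly what the paper proves and uses, with the exponent pair $(p,p^*)=(2,\tfrac{2n}{n-2})$ rather than your $(\tfrac{2n}{n+2},2)$. The choice $p=2$ is not incidental: after the log-radial change the conjugated gradient becomes $(\partial_s-\beta,\nabla_\omega)$, and in $L^2$ the cross term $\langle\partial_s g,g\rangle$ vanishes by integration by parts, so $\|(\partial_s-\beta,\nabla_\omega)g\|_{L^2}\ge\|\nabla g\|_{L^2}$ uniformly in $\beta$, after which ordinary Gagliardo--Nirenberg--Sobolev closes the loop. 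The paper packages this as a Hardy-type inequality (Lemma~\ref{hardy}) plus Lemma~\ref{in} plus GNS. With your exponent $p<2$ no such orthogonality is available, and ``standard Sobolev on the cylinder'' does not by itself yield a constant independent of $\tau$; the resonances you allude to are a feature of second-order Carleman estimates for $\Delta$, not of the first-order situation here.

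Two further points. First, with the corrected (equal-weight) estimate your absorption step simplifies: H\"older gives $\||x|^{-\tau}Vu\|_{L^p}\le \|V\|_{L^n}\||x|^{-\tau}u\|_{L^{p^*}}$ with no factor of $R$, and the rest of your argument (inner cutoff via infinite-order vanishing, outer comparison, $\tau\to\infty$) goes through. Second, the paper's $L^2$-based route only works for $n\ge 3$ (since $p^*=\tfrac{2n}{n-2}$); the case $n=2$ is handled separately by quoting a unique continuation result for $\bar\partial$ (Proposition~\ref{pz}), which you do not address.
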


 The $n=2$ case in Theorem \ref{main} is due to a unique continuation property result in \cite{PZ} concerning the $\bar\partial$ operator. For higher dimensions, the proof makes use of a Hardy-type inequality, along with the Gagliardo-Nirenberg-Sobolev inequality. When the potential is no  longer in $L^n$, one can still get the unique continuation property for some special types of potentials, see Theorem \ref{main2}. However, as shown in Example \ref{ex}, the property fails in general for $V\notin L^n$.   On the other hand, Theorem \ref{wucp} states that the weak unique continuation property always holds for \eqref{eq} as long as $V\in L^2$.

As a consequence of   Theorem \ref{main},  we  obtain the following property of vanishing to finite order for the exponential of $ W^{1,n}$ functions.  Note  that the  $W^{1,n}$ space is the critical Sobolev space where the Sobolev embedding theorem fails, and instead is substituted by the Moser-Trudinger inequality.

 \begin{theorem}\label{nf}
 Let $\Omega$ be an open set in $\mathbb{R}^n, n\ge 2$. Suppose   $\phi:\Omega\to\re$  with $\phi\in W_{loc}^{1,n}(\Omega)$. Then   the exponential $e^\phi$ of $\phi$  vanishes to  finite order in the $L^2$ sense at each point in $\Omega$.  
\end{theorem}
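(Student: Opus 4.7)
The plan is to apply Theorem \ref{main} to $u := e^\phi$ with potential $V = |\nabla \phi| \in L^n_{loc}(\Omega)$: the identity $|\nabla e^\phi| = e^\phi |\nabla \phi|$ gives the differential inequality automatically, and $e^\phi > 0$ a.e.\ would preclude the conclusion $u \equiv 0$, so $e^\phi$ would necessarily vanish only to finite order. The obstruction is that $e^\phi$ need not lie in $H^1_{loc}(\Omega)$ when $n = 2$; for instance, $\phi(x) = \log\log(1/|x|)$ near $0 \in \mathbb R^2$ lies in $W^{1,2}_{loc}$ while $e^\phi = \log(1/|x|) \notin W^{1,2}_{loc}$.

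To circumvent this, for each fixed $N > 0$ I would introduce the truncation $u_N := \min(e^\phi, N) = F(\phi)$ with $F(t) := \min(e^t, N)$. Since $F$ is Lipschitz with $F'(t) = e^t \mathbf{1}_{\{t < \log N\}}$ a.e., the chain rule for compositions of Sobolev and Lipschitz functions (Marcus--Mizel) yields $u_N \in W^{1,n}_{loc}(\Omega)$ with
\[
  \nabla u_N \;=\; u_N \cdot \mathbf{1}_{\{\phi < \log N\}} \, \nabla \phi,
\]
so $|\nabla u_N| \leq V\, u_N$ a.e.\ with $V = |\nabla \phi| \in L^n_{loc}(\Omega)$. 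The bounds $u_N \leq N$ and $|\nabla u_N| \leq N |\nabla \phi| \in L^n_{loc} \subseteq L^2_{loc}$ (using $n \geq 2$) together give $u_N \in H^1_{loc}(\Omega)$, so the hypotheses of Theorem \ref{main} are met.

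Because $\phi \in L^n_{loc}$ is finite almost everywhere, $u_N > 0$ a.e., so $u_N \not\equiv 0$; Theorem \ref{main} then forces $u_N$ to vanish only to finite order at $x_0$ in the $L^2$ sense. The pointwise domination $0 \leq u_N \leq e^\phi$ gives
\[
  \int_{|x - x_0| < r} u_N^2 \, dv \;\leq\; \int_{|x - x_0| < r} e^{2\phi} \, dv
\]
for every $r > 0$, so finite-order vanishing transfers from $u_N$ to $e^\phi$, completing the proof.

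The main obstacle is the failure of $e^\phi$ to lie in $H^1_{loc}$ in the critical case $n = 2$; the truncation $\min(e^\phi, N)$ is the device that repairs this while preserving the exact structural inequality $|\nabla u| \leq V|u|$ with potential $V = |\nabla \phi| \in L^n_{loc}$, so Theorem \ref{main} applies verbatim. The only ancillary tool is the Sobolev--Lipschitz chain rule, which is classical.
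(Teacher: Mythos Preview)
Your proof is correct and follows a route parallel to, but distinct from, the paper's. Both arguments reduce to applying Theorem~\ref{main} to a bounded surrogate for $e^\phi$: you truncate via $u_N = \min(e^\phi, N)$, while the paper instead uses $e^{-|\phi|}$ (automatically bounded by $1$, with $|\nabla e^{-|\phi|}| = |\nabla|\phi||\,e^{-|\phi|}$). In each case the surrogate lies in $H^1_{loc}$, satisfies $|\nabla u| \le |\nabla\phi|\,|u|$ via the Sobolev--Lipschitz chain rule, is not identically zero, and is pointwise dominated by $e^\phi$, so the finite-order conclusion transfers upward by the inequality $\int_{B_r} (\text{surrogate})^2 \le \int_{B_r} e^{2\phi}$. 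Your truncation is perhaps the more direct device; the paper's choice $e^{-|\phi|}$ avoids the auxiliary parameter $N$ but requires the side observation $|\phi| \in W^{1,n}_{loc}$.

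There is one loose end. The paper's definition of finite-order vanishing is stated only for $L^2_{loc}$ functions, and accordingly the paper devotes the first half of its proof of Theorem~\ref{nf} to establishing $e^\phi \in L^2_{loc}(\Omega)$ via the Moser--Trudinger inequality. Your argument shows that the infinite-order limit condition fails for $e^\phi$ (by comparison with $u_N$), but does not verify that $\int_{B_r(x_0)} e^{2\phi}\,dv$ is finite. If $L^2_{loc}$ membership is read as part of the theorem's content, you should append the Moser--Trudinger step; if ``vanishes to finite order'' is read merely as the negation of the flatness condition, your proof is complete as written and in fact bypasses Moser--Trudinger altogether.
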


In the second part of the paper, we focus on locally Lipschitz solutions to \eqref{eq}. %By Rademacher's Theorem,   the gradient of such solutions  are defined  a.e. on $\Omega$. 
Under the context of  this  more restricted function space, we are able to prove a uniqueness result below by just assuming the vanishing of the first jets. A similar uniqueness result was discussed in \cite{PW} for higher order differential operators on smooth functions of one variable ($n=1$). It is worth pointing out that the Lipschitz assumption on the solutions cannot  be dropped here  when $n\ge 2$, see Remark \ref{rem}. 

\begin{theorem}\label{thm1.1}
  Let $\Omega$ be a domain in $\mathbb R^n, n\ge 1$ and $V \in L^n_{loc}(\Omega)$. Suppose  $u=(u_1, \ldots, u_M):\Omega\to\re^M$    is a locally Lipschitz function on $\Omega$  satisfying
 % \begin{equation*}\label{eq1.1}
   $|\nabla u|\leq V |u|$  a.e.\ on $\Omega$.
 % \end{equation*}
  If $u(x_0)=0$ at some $x_0\in \Omega$, then $u\equiv 0$.
\end{theorem}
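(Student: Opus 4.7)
The set $Z := \{x \in \Omega : u(x) = 0\}$ is closed by continuity of the Lipschitz function $u$ and contains $x_0$; since $\Omega$ is connected, the plan is to show that $Z$ is also open, so by translation it suffices to prove $u \equiv 0$ on some ball $B_\rho(x_0)$. To obtain this local vanishing I would invoke Theorem \ref{main}: the locally Lipschitz function $u$ lies in $H^1_{loc}(\Omega)$, so it is enough to verify that $u$ vanishes to infinite order at $x_0$ in the $L^2$ sense.

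The Lipschitz bound $|u(x)| \le L|x-x_0|$ coming from $u(x_0) = 0$ immediately gives $\int_{B_r(x_0)} |u|^2\,dv \le C r^{n+2}$, which is only finite-order vanishing. To promote this to arbitrary order I plan to iterate the differential inequality along radial segments. For almost every $\omega \in S^{n-1}$ the radial restriction $v_\omega(r) := u(x_0+r\omega)$ is Lipschitz with $v_\omega(0) = 0$, and by Fubini applied to $|\nabla u| \le V|u|$ one has $|v_\omega'(r)| \le V(x_0+r\omega)|v_\omega(r)|$ a.e., so
\[
 |v_\omega(r)| \;\le\; \int_0^r V(x_0+s\omega) |v_\omega(s)|\,ds.
\]
Inserting $|v_\omega(s)| \le Ls$ and splitting $V\cdot s = (V s^{(n-1)/n})\cdot s^{1/n}$ in a H\"older estimate with exponents $n$ and $n/(n-1)$ yields $|v_\omega(r)| \le C_n L\,\beta_\omega(r)\,r$ with
\[
 \beta_\omega(r) \;:=\; \Bigl(\int_0^r V(x_0+s\omega)^n\, s^{n-1}\,ds\Bigr)^{1/n},
\]
and iterating this substitution $k$ times produces $|v_\omega(r)| \le L(C_n\beta_\omega(r))^k\, r$ for every $k \ge 1$.

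The crucial quantitative input is that $\int_{S^{n-1}} \beta_\omega(r)^n\,d\omega = \|V\|_{L^n(B_r(x_0))}^n \to 0$ as $r\to 0$. Squaring the $k$-th iterated bound, integrating in polar coordinates, and controlling the angular integral $\int_{S^{n-1}} \beta_\omega(r)^{2k}\,d\omega$ by H\"older (for $2k\le n$) and by Chebyshev combined with the pointwise bound $\beta_\omega(r)\le \beta_\omega(R)$ (for $2k > n$), should then yield $\int_{B_r(x_0)} |u|^2\,dv = o(r^N)$ for every $N$. Theorem \ref{main} then gives $u\equiv 0$ on a neighborhood of $x_0$, whence $u\equiv 0$ on $\Omega$ by connectedness.

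The main obstacle is precisely that $V \in L^n_{loc}$ does \emph{not} imply $V \in L^1$ along almost every ray through $x_0$, so a direct Gronwall argument on the radial restrictions $v_\omega$ fails. The Lipschitz assumption is exactly what rescues the iteration: it contributes an extra factor of $s$ at the first step, which combines with the polar weight $s^{n-1}$ to produce the finite H\"older pairing with $V^n s^{n-1}$; subsequent iterates then carry increasing powers of $\beta_\omega$, whose $L^n$-norm on $S^{n-1}$ is exactly $\|V\|_{L^n(B_r)}$ and hence tends to zero with $r$.
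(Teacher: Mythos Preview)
Your radial iteration is correct: for a.e.\ $\omega$ one obtains $|v_\omega(r)|\le L(C_n\beta_\omega(r))^k r$ for every $k\ge 1$. The gap is the next step. Powers of $\beta_\omega(r)$ do \emph{not} in general contribute powers of $r$, so the bound cannot force $\int_{B_r}|u|^2=o(r^N)$ for arbitrary $N$. For the radial potential $V(x)=c|x|^{-1}(\log(1/|x|))^{-2/n}\in L^n_{loc}$ one computes $\beta_\omega(r)^n=c^n/\log(1/r)$ for every $\omega$, and your estimate yields only $\int_{B_r}|u|^2\lesssim r^{n+2}(\log(1/r))^{-2k/n}$, which is $o(r^{n+2})$ for each fixed $k$ but never $O(r^{n+2+\varepsilon})$. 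Your proposal for $2k>n$ (``Chebyshev plus $\beta_\omega(r)\le\beta_\omega(R)$'') does not help: $\beta_\omega(R)$ is only known to lie in $L^n(S^{n-1})$, so $\int_{S^{n-1}}\beta_\omega(r)^{2k}\,d\omega$ need not even be finite when $2k>n$, and in the radial example it is finite yet contributes no power of $r$. The reduction to Theorem~\ref{main} via infinite-order vanishing therefore does not close.

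The fix is one step away and makes the detour through Theorem~\ref{main} unnecessary. Since your bound holds for \emph{every} $k$, letting $k\to\infty$ gives $v_\omega(r)=0$ whenever $C_n\beta_\omega(r)<1$; because $\beta_\omega(r)\to 0$ for a.e.\ $\omega$, each such $v_\omega$ vanishes on an initial segment $[0,s_\omega)$. On $[s_\omega,R]$ the weight $s^{n-1}$ is bounded below, so $V(\cdot\,\omega)\in L^1$ there and Gronwall propagates the zero to the full radius. Hence $u=0$ on a.e.\ ray through $x_0$, and by continuity $u\equiv 0$ on a ball. This is exactly the paper's route: the one-variable step is packaged as Lemma~\ref{lem3.1} (proved by a $\delta=\sup\{d:\varphi\equiv 0\text{ on }[0,d]\}$ argument rather than by iteration), the slicing step is Theorem~\ref{thm3.4}, and Theorem~\ref{thm1.1} follows since $\int|\nabla u/u|^n\le\int V^n<\infty$ would contradict Theorem~\ref{thm3.4}.
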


 Theorem \ref{thm1.1} can be readily applied to study the uniqueness of some types of nonlinear differential systems, as indicated in Corollary \ref{coo}. In Section \ref{sec5}, we discuss further applications  under the Lipschitz setting.  In particular,  Theorem \ref{t1} shows that the logarithm of a positive Lipschitz function cannot fall in $ W^{1,n}$ near every zero point of the function.  On the other hand, we prove   that   if  in addition $e^\phi$    in Theorem \ref{nf} is Lipschitz, then $e^\phi$ must be nowhere zero, see  Corollary \ref{nf1}.

In the last  section, we  discuss related results for  the $\bar\partial$ operator on domains in $\mathbb C^n$. To start with, we construct Example \ref{ex5} to show that  the gradient operator $\nabla$ in Theorem \ref{thm1.1} cannot be replaced by the $\bar\partial$ operator even for real analytic functions. On the other hand, we  give finer characterizations  in terms of an $L^2$ divergence for holomorphic functions that are extended  smoothly across the boundary.

%The Proof of Theorem \ref{thm1.1} will be elementary after
%We start by giving some notations.

%\begin{definition}\label{def2.2}
 % For  an arbitrary subset $K\subseteq\re^n$, a function $u:K\to\re$ is {\underline{Lipschitz}} on $K$ means that there is some constant $C$ so that for all $x,y\in K$, $|u(y)-u(x)|< C|y-x|$.
%\end{definition}

\section{Unique continuation for $H^1$ solutions}
Let $\Omega$ be a domain (by which we mean a connected open set) in  $\mathbb R^n=\{(x_1,\ldots,x_n)\}$. %Given $p\ge 1$, denote by $W^{1, p}_{loc}(\Omega)$ the Sobolev space of all $L^p_{loc}(\Omega)$ functions  whose first order weak derivatives exist and belong to $L^p_{loc}(\Omega)$. In particular, when $p=2$, $H_{loc}^1(\Omega) = W^{1, 2}_{loc}(\Omega)$. 
For scalar valued $u:\Omega\to\re$,  $u\in W^{1, p}_{loc}(\Omega)$, the {\underline{gradient}} of $u$ is the vector of first order weak partial derivatives:   $$\nabla u=(\partial_{x_1}u ,\ldots,\partial_{x_n}u),$$ defined on  $\Omega$. The {\underline{norm}} of a vector $x\in\re^n$ is $|x|=\sqrt{x_1^2+\cdots+x_n^2}$, and in particular the norm of the gradient is defined on the domain of the gradient by
  $$|\nabla u|^2=(\partial_{x_1}u)^2+\cdots+(\partial_{x_n}u)^2.$$

Given $r>0$ and $x_0\in{\mathbb R}^n$, let $B_r(x_0)$ denote the open ball centered at $x_0$ with radius $r$ --- in the special case $x_0=0$, we abbreviate $B_r=B_r(0)$.  For sets $A\subseteq{\mathbb R}^n$, let $\partial A$ denote the set of boundary points of $A$ and let $\overline{A}$ denote the closure of $A$.

In this section, we prove  Theorem \ref{main}, the unique continuation property for vector valued $H^1$ solutions $u:\Omega\to\re^M$, where the inequality \eqref{eq} reads as \begin{equation}\label{va}
     |\nabla u|=\left(\sum_{j=1}^n\sum_{k=1}^M | \partial_{x_j}u_k|^2\right)^{\frac{1}{2}}\le V \left(\sum_{k=1}^M|u_k|^2 \right)^{\frac{1}{2}}= V|u|.
\end{equation}
We first prove the following Hardy-type inequality for $\nabla$. 

\begin{lemma}\label{hardy}
Let $u\in H^1(\mathbb R^n)$ with support outside a neighborhood of $0$. Then for any $\lambda > \frac{n}{2}$,
\begin{equation}\label{ha}
    \int_{\mathbb R^n} \frac{|u(x)|^2}{|x|^{2\lambda}}dv\le \frac{4}{(2\lambda-n)^2}\int_{\mathbb R^n} \frac{|\nabla u(x)|^2}{|x|^{2\lambda-2}}dv. 
\end{equation}
\end{lemma}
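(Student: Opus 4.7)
The plan is to establish \eqref{ha} by the classical Hardy-type integration-by-parts argument, keyed on the divergence identity
$$\operatorname{div}\!\left(\frac{x}{|x|^{2\lambda}}\right) = \frac{n-2\lambda}{|x|^{2\lambda}},$$
which follows by summing $\partial_{x_j}\bigl(x_j|x|^{-2\lambda}\bigr)=|x|^{-2\lambda}-2\lambda\, x_j^2|x|^{-2\lambda-2}$ over $j$.

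First I would reduce to the case $u\in C_c^\infty(\mathbb{R}^n\setminus\{0\})$ by density. Since $u$ vanishes on some ball $B_\delta$, one combines a smooth cutoff of a large ball $B_R$ with a mollification at scale $\varepsilon\ll\delta$ to produce approximations $u_k\in C_c^\infty(\mathbb{R}^n\setminus\{0\})$ with $u_k\to u$ in $H^1(\mathbb{R}^n)$, all supported in $\{|x|\ge\delta/2\}$. On that set the weights $|x|^{-2\lambda}$ and $|x|^{-(2\lambda-2)}$ are uniformly bounded (recall $\lambda>n/2\ge1$), so $L^2$-convergence of $u_k$ and $\nabla u_k$ passes both integrals in \eqref{ha} to the limit. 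It therefore suffices to prove the inequality for smooth $u$ compactly supported off the origin.

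For such $u$, multiplying the divergence identity by $|u|^2$ and integrating by parts (no boundary contributions, as everything is compactly supported away from $0$) yields
$$(n-2\lambda)\int_{\mathbb{R}^n}\frac{|u|^2}{|x|^{2\lambda}}\,dv = -2\int_{\mathbb{R}^n}\frac{x\cdot u\nabla u}{|x|^{2\lambda}}\,dv.$$
Applying $|x\cdot\nabla u|\le|x|\,|\nabla u|$ and then the Cauchy--Schwarz inequality in integral form gives
$$(2\lambda-n)\int_{\mathbb{R}^n}\frac{|u|^2}{|x|^{2\lambda}}\,dv \;\le\; 2\left(\int_{\mathbb{R}^n}\frac{|u|^2}{|x|^{2\lambda}}\,dv\right)^{\!1/2}\left(\int_{\mathbb{R}^n}\frac{|\nabla u|^2}{|x|^{2\lambda-2}}\,dv\right)^{\!1/2}.$$
Dividing by the first factor on the right (finite by the support assumption, and we may assume it is nonzero, else \eqref{ha} is trivial) and squaring produces \eqref{ha} with the stated constant $4/(2\lambda-n)^2$.

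The only real subtlety is the density step: one must verify that the approximations retain their supports in $\{|x|\ge\delta/2\}$ and that $L^2$-convergence of $u_k$ and $\nabla u_k$ suffices to pass the weighted integrals to the limit. Once the boundedness of the weights on this set is recorded, this is routine, and the algebraic core of the proof reduces to the three-line divergence computation above. The same argument works verbatim for vector-valued $u=(u_1,\ldots,u_M)$ upon interpreting $u\nabla u$ as $\sum_k u_k\nabla u_k$ and using $\bigl|\sum_k u_k\nabla u_k\bigr|\le|u|\,|\nabla u|$, which is what is needed for the application to Theorem~\ref{main}.
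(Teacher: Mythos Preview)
Your proposal is correct and follows essentially the same approach as the paper: the paper also first treats $u\in C_c^\infty(\mathbb{R}^n\setminus\{0\})$ via the divergence identity $\sum_j\partial_{x_j}(x_j|x|^{-2\lambda})=(n-2\lambda)|x|^{-2\lambda}$ (phrased there as Stokes' theorem applied to an explicit $(n-1)$-form), then Cauchy--Schwarz, and finally the density argument using that the weights are bounded on $\{|x|\ge r\}$. The only cosmetic difference is that the paper writes out the density step via explicit triangle-inequality estimates rather than invoking boundedness of the weights in one line, but the content is the same.
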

\begin{proof}
%Since the inequality is scaling invariant, we only need to prove it for functions supported on $B_1\setminus \{0\}$.
We first show the inequality when $u\in C_c^\infty(\mathbb R^n\setminus \{0\})$. Let $$F(x):  =\sum_{j=1}^n\frac{|u(x)|^2 x_j}{|x|^{2\lambda}} dx_1\wedge \cdots \wedge  dx_{j-1}  \wedge  dx_{j+1}\wedge \cdots \wedge dx_n.$$ Then $F$ is a smooth $(n-1)$ form with compact support. Note for each $j=1, \ldots, n$,
$$ \sum_{j=1}^n \partial_{x_j}  \left(\frac{ x_j}{|x|^{2\lambda}}\right) =  \sum_{j=1}^n  \left(\frac{ 1}{|x|^{2\lambda}} -\frac{ 2\lambda x_j^2}{|x|^{2\lambda+2}}\right) = \frac{n-2\lambda}{|x|^{2\lambda}}.  $$
Applying Stokes' theorem on $F$, we have
$$ 0 = \int_{\mathbb R^n}d F = \int_{\mathbb R^n} \frac{(n-2\lambda)|u(x)|^2}{|x|^{2\lambda}}dv + \int_{\mathbb R^n}  \frac{2 u(x)  \langle\nabla u(x), x\rangle}{|x|^{2\lambda}}dv.$$
Thus
\begin{equation*}
    \int_{\mathbb R^n} \frac{|u(x)|^2}{|x|^{2\lambda}}dv  = \frac{2}{(2\lambda-n)}\int_{\mathbb R^n}  \frac{u(x) \langle\nabla u(x), x\rangle}{|x|^{2\lambda}}dv.
\end{equation*}
By the Cauchy-Schwarz inequality, one further gets
\begin{equation*}\begin{split}
     \int_{\mathbb R^n} \frac{|u(x)|^2}{|x|^{2\lambda}}dv   \le& \frac{2}{(2\lambda-n)}\int_{\mathbb R^n}  \frac{|u(x)||\nabla u(x)|}{|x|^{2\lambda-1}}dv\\
     \le&\frac{2}{(2\lambda-n)}\left(\int_{\mathbb R^n} \frac{|u(x)|^2 }{|x|^{2\lambda}}dv\right)^{\frac{1}{2}}\left(\int_{\mathbb R^n}  \frac{|\nabla  u(x) |^2}{|x|^{2\lambda-2}}dv\right)^\frac{1}{2}.
\end{split}
   \end{equation*}
Dividing both sides  by $\left(\int_{\mathbb R^n} \frac{|u(x)|^2 }{|x|^{2\lambda}}dv\right)^{\frac{1}{2}} $ and then squaring both sides, we obtain
 \eqref{ha} for $u\in C_c^\infty(\mathbb R^n\setminus \{0\})$.
 
For general $u\in H^1(\mathbb R^n)$ with support, say,  away from $B_r, r>0$, we use the standard density argument. In detail, let $u^{(j)} \in C_c^\infty(\mathbb R^n\setminus B_r)\rightarrow u$ in $H^1$ norm. Then
\begin{equation*}
    \begin{split}
    \left(\int_{\mathbb R^n} \frac{|u(x)|^2}{|x|^{2\lambda}}dv\right)^\frac{1}{2}\le& \left(\int_{\mathbb R^n\setminus B_r}\frac{|u(x)-u^{(j)} (x)|^2}{|x|^{2\lambda}}dv\right)^\frac{1}{2}+ \left(\int_{\mathbb R^n} \frac{|u^{(j)} (x)|^2}{|x|^{2\lambda}}dv\right)^\frac{1}{2}\\
    \le& \frac{1}{r^{\lambda}}\left(\int_{\mathbb R^n}|u(x)-u^{(j)} (x)|^2dv\right)^\frac{1}{2}+  \frac{2}{(2\lambda-n)}\left(\int_{\mathbb R^n} \frac{|\nabla u^{(j)} (x)|^2}{|x|^{2\lambda-2}}dv\right)^\frac{1}{2}.
    \end{split}
\end{equation*}
Here we used \eqref{ha} for $u^{(j)}\in C_c^\infty(\mathbb R^n\setminus \{0\})$. Thus
\begin{eqnarray*}
  &&\left(\int_{\mathbb R^n} \frac{|u(x)|^2}{|x|^{2\lambda}}dv\right)^\frac{1}{2} \\
  &\le&  \frac{1}{r^{\lambda}}\left(\int_{\mathbb R^n}|u(x)-u^{(j)} (x)|^2dv\right)^\frac{1}{2}+  \frac{2}{(2\lambda-n)}\left(\int_{\mathbb R^n\setminus B_r} \frac{|\nabla u^{(j)} (x) -\nabla u(x)|^2}{|x|^{2\lambda-2}}dv\right)^\frac{1}{2} \\
  &&\ +  \frac{2}{(2\lambda-n)}\left(\int_{\mathbb R^n}  \frac{|\nabla  u(x) |^2}{|x|^{2\lambda-2}}dv\right)^\frac{1}{2}\\
    &\le & \frac{1}{r^{\lambda}}\left(\int_{\mathbb R^n}|u(x)-u^{(j)} (x)|^2dv\right)^\frac{1}{2}+ \frac{2}{(2\lambda-n)r^{\lambda-1}} \left(\int_{\mathbb R^n} |\nabla u^{(j)} (x) -\nabla u(x)|^2dv\right)^\frac{1}{2} \\
    &&\ +  \frac{2}{(2\lambda-n)}\left(\int_{\mathbb R^n}  \frac{|\nabla  u(x) |^2}{|x|^{2\lambda-2}}dv\right)^\frac{1}{2}\\
   & \le&  \left(\frac{1}{r^{\lambda}}+  \frac{2}{(2\lambda-n)r^{\lambda-1}}\right)\|u-u^{(j)} \|_{H^1(\mathbb R^n)}+  \frac{2}{(2\lambda-n)}\left(\int_{\mathbb R^n}  \frac{|\nabla  u(x) |^2}{|x|^{2\lambda-2}}dv\right)^\frac{1}{2}.
\end{eqnarray*}
Letting $j\rightarrow \infty$, we   have the desired inequality \eqref{ha}.
\end{proof}

\begin{lemma}\label{in}
Let $u\in H^1(\mathbb R^n)$ with support outside a neighborhood of 0.  Then there exists a constant $C_0>0$ such that for any $\lambda >>\frac{n}{2}$,
$$\int_{\mathbb R^n} \left|\nabla \left(\frac{ u(x)}{|x|^{\lambda-1}}\right)\right|^2dv\le C_0 \int_{\mathbb R^n} \frac{|\nabla u(x)|^2 }{|x|^{2\lambda-2}}dv.$$
\end{lemma}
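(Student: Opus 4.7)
The plan is to expand the gradient using the product rule, apply the elementary inequality $|a+b|^2 \le 2|a|^2 + 2|b|^2$, and then invoke the Hardy-type inequality from Lemma \ref{hardy} to absorb the resulting zeroth-order term.

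More precisely, since $u \in H^1(\mathbb{R}^n)$ has support disjoint from a neighborhood of the origin, the weight $|x|^{-(\lambda-1)}$ is smooth on the support of $u$, so $u/|x|^{\lambda-1} \in H^1(\mathbb{R}^n)$ with weak gradient given by
\begin{equation*}
\nabla\!\left(\frac{u(x)}{|x|^{\lambda-1}}\right) = \frac{\nabla u(x)}{|x|^{\lambda-1}} - (\lambda-1)\frac{u(x)\, x}{|x|^{\lambda+1}}.
\end{equation*}
Taking squared norms and applying $|a+b|^2 \le 2|a|^2 + 2|b|^2$, I would obtain the pointwise bound
\begin{equation*}
\left|\nabla\!\left(\frac{u(x)}{|x|^{\lambda-1}}\right)\right|^2 \le \frac{2|\nabla u(x)|^2}{|x|^{2\lambda-2}} + 2(\lambda-1)^2 \frac{|u(x)|^2}{|x|^{2\lambda}}.
\end{equation*}

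Integrating over $\mathbb{R}^n$ and applying Lemma \ref{hardy} to the second term, the right-hand side is bounded by
\begin{equation*}
\left(2 + \frac{8(\lambda-1)^2}{(2\lambda-n)^2}\right)\int_{\mathbb{R}^n} \frac{|\nabla u(x)|^2}{|x|^{2\lambda-2}}\, dv.
\end{equation*}
Since $\frac{8(\lambda-1)^2}{(2\lambda-n)^2} \to 2$ as $\lambda \to \infty$, this coefficient is uniformly bounded for $\lambda \gg n/2$, so one may take $C_0$ to be any constant larger than $4$ (say $C_0 = 10$), finishing the proof.

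There is essentially no obstacle here — the lemma is a mechanical consequence of the product rule together with the Hardy inequality already established in Lemma \ref{hardy}. The only minor point to be careful about is the justification of the product rule for $u \in H^1$, which is immediate because the singular weight is a smooth function on the support of $u$.
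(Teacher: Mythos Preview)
Your proof is correct and follows essentially the same approach as the paper: expand the gradient via the product rule, use $|a+b|^2\le 2|a|^2+2|b|^2$, apply Lemma~\ref{hardy} to the zeroth-order term, and observe that the resulting coefficient is bounded as $\lambda\to\infty$. Your constant $\frac{8(\lambda-1)^2}{(2\lambda-n)^2}$ is in fact the one dictated by Lemma~\ref{hardy}; the paper records $\frac{2(\lambda-1)^2}{(2\lambda-n)^2}$, which appears to drop a harmless factor of $4$.
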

\begin{proof}
 Since $|\nabla |x|| =1$, we have
\begin{equation*}
    \begin{split}
 \int_{\mathbb R^n} \left|\nabla \left(\frac{ u(x)}{|x|^{\lambda-1}}\right)\right|^2dv\le& 2\int_{\mathbb R^n} \frac{|\nabla u(x)|^2 }{|x|^{2\lambda-2}}dv + 2(\lambda-1)^2\int_{\mathbb R^n} \frac{| u(x)|^2 }{|x|^{2\lambda}}dv\\
\le& 2\int_{\mathbb R^n} \frac{|\nabla u(x)|^2 }{|x|^{2\lambda-2}}dv + \frac{8(\lambda-1)^2}{(2\lambda-n)^2}\int_{\mathbb R^n} \frac{|\nabla u(x)|^2 }{|x|^{2\lambda-2}}dv\\
=&\left(2+  \frac{8(\lambda-1)^2}{(2\lambda-n)^2}    \right)\int_{\mathbb R^n} \frac{|\nabla u(x)|^2 }{|x|^{2\lambda-2}}dv.
    \end{split}
\end{equation*}
Here in the second inequality  we used  Lemma \ref{hardy}. The lemma thus follows from the fact that $\displaystyle{\lim_{\lambda\rightarrow \infty} \frac{8(\lambda-1)^2}{(2\lambda-n)^2} = 2}$.
\end{proof}

Throughout the rest of the paper, we occasionally  use the notation $a\lesssim b$ 
 for two quantities $a$ and $b$,  to mean that there exists a universal constant  $C$ (dependent only possibly on $n$)  such that $a \le Cb$. To prove Theorem \ref{main} in the case when $n=2$, we will use the following unique continuation property established in \cite{PZ} for $\bar\partial$. Note that identifying $z\in \mathbb C$ with $(x_1, x_2)\in \mathbb R^2$, then  for a function $u$ on $\Omega$, $\bar\partial_z u= \frac{1}{2}\left( \partial_{x_1} u +  i\partial_{x_2}u\right)$. It would be interesting to have a real-variable approach for this case, but we currently do not.

\begin{proposition}\cite{PZ}\label{pz}
Let $\Omega$ be a  domain in $\mathbb C$. Suppose $u=(u_1, \ldots, u_N): \Omega\rightarrow \mathbb C^N$ with $u\in H^1_{loc}(\Omega)$   and satisfies $ |\bar\partial u|\le V|u|$ a.e.\ on $\Omega$
 for some $V\in L_{loc}^2(\Omega)$.  If $u$ vanishes to infinite order at $z_0\in \Omega$, then $u$ vanishes identically.   \boxx
\end{proposition}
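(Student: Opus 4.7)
The plan is to prove Proposition~\ref{pz} by the Carleman-estimate method adapted to $\bar\partial$ on $\mathbb C$, an approach that must reckon with the fact that $V\in L^2_{loc}$ is the scale-critical class in complex dimension one. After a translation, assume $z_0=0$, fix $\overline{B_R}\subset\Omega$, and choose $\chi\in C_c^\infty(B_R)$ with $\chi\equiv 1$ on $B_{R/2}$. The compactly supported function $w:=\chi u\in H^1(\mathbb C;\mathbb C^N)$ then satisfies
$$|\bar\partial w|\le V|w|+|\bar\partial\chi|\,|u|\qquad\text{a.e.\ on }\mathbb C,$$
with the second term supported in the annulus $A=B_R\setminus B_{R/2}$.

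The key ingredient is a weighted Carleman inequality for $\bar\partial$: there is a universal constant $C$ such that, along a sequence $\lambda_j\to\infty$ (half-integers, say, avoiding angular Fourier resonances) and every $v\in C_c^\infty(\mathbb C\setminus\{0\};\mathbb C^N)$,
\begin{equation}\label{pzcar}
\lambda_j^{\,2}\int_{\mathbb C}\frac{|v|^2}{|z|^{2\lambda_j+2}}\,dA\le C\int_{\mathbb C}\frac{|\bar\partial v|^2}{|z|^{2\lambda_j}}\,dA.
\end{equation}
I would prove \eqref{pzcar} by decomposing $v=\sum_k v_k(r)e^{ik\theta}$ in the angular variable: since $\bar\partial$ acts on the $k$-th mode as $\tfrac12 e^{i\theta}(\partial_r-k/r)$, the substitution $g_k:=r^{-\lambda_j}v_k$ reduces the claim to a family of one-dimensional weighted inequalities for the operators $(\lambda_j-k)+r\partial_r$, each with a constant of order $|k-\lambda_j|^{-1}$; keeping $\lambda_j$ at distance at least $\tfrac12$ from every integer makes the constants uniform in $k$. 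Because $u$ vanishes to infinite order at $0$ in the $L^2$ sense, the left-hand side of \eqref{pzcar} with $v=w$ is finite for every $\lambda_j$, so a standard truncation argument (approximate $w$ by $w\bigl(1-\psi(\cdot/\varepsilon)\bigr)$ for $\psi$ a cutoff of the origin and let $\varepsilon\to 0$) extends \eqref{pzcar} to $w$.

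The main obstacle is absorbing the resulting potential term. Combining \eqref{pzcar} with the differential inequality yields
$$\lambda_j^{\,2}\int\frac{|w|^2}{|z|^{2\lambda_j+2}}\,dA\le C\int\frac{V^2|w|^2}{|z|^{2\lambda_j}}\,dA+C\,R^{-2\lambda_j}\|u\|_{H^1(A)}^2,$$
and with $V$ only in $L^2$ no pointwise control is available. I would handle this by shrinking $R$ so that $\|V\|_{L^2(B_R)}$ is as small as required, then bootstrapping \eqref{pzcar} to an $L^p$--$L^q$ version with $\tfrac1p-\tfrac1q=\tfrac12$ (achievable via the critical two-dimensional Sobolev embedding applied to $|z|^{-\lambda_j-1}w\in W^{1,2}_{loc}$, whose relevant gradient is controlled by the right-hand side of \eqref{pzcar} together with the Beurling transform bound $\|\partial v\|_{L^p}\lesssim\|\bar\partial v\|_{L^p}$). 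H\"older in the dual Sobolev pair then bounds $\int V^2|w|^2|z|^{-2\lambda_j}\,dA$ by a multiple of $\|V\|_{L^2(B_R)}^2$ times the left-hand side of \eqref{pzcar}; for $R$ sufficiently small, the coefficient is $\le\tfrac12$ and the term is absorbed. What remains, $\lambda_j^{\,2}\int|w|^2|z|^{-2\lambda_j-2}\,dA\lesssim R^{-2\lambda_j}$, compared with the lower bound $r^{-2\lambda_j-2}\int_{B_r}|w|^2$ on the left for any $r<R/2$, forces $\int_{B_r}|w|^2=0$ on sending $\lambda_j\to\infty$. Hence $u\equiv 0$ near $z_0$, and the standard clopen argument (the set of infinite-order $L^2$-vanishing points is trivially closed and, by what was just proved, open) propagates the conclusion to all of $\Omega$.
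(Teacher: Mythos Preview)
The paper does not supply a proof of Proposition~\ref{pz}; it is quoted from \cite{PZ} and closed with a box, then used as a black box for the $n=2$ case of Theorem~\ref{main}. So there is no in-paper argument to compare your proposal against. Indeed, the paper's own proof of Theorem~\ref{main} for $n\ge3$ relies on the Gagliardo--Nirenberg--Sobolev embedding $H^1\hookrightarrow L^{2n/(n-2)}$, which has no analogue at $n=2$; this is precisely why the two-dimensional case is outsourced to \cite{PZ}.

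Your Carleman skeleton is sound, and the $L^2$--$L^2$ inequality along half-integer $\lambda_j$ is correct and proved as you indicate. The gap is the absorption step. You invoke ``the critical two-dimensional Sobolev embedding'' for $|z|^{-\lambda_j-1}w\in W^{1,2}$, but $W^{1,2}(\mathbb R^2)\hookrightarrow L^\infty$ is exactly the embedding that fails, and H\"older against $\|V\|_{L^2}^2$ forces the complementary factor on $|z|^{-\lambda_j}w$ into $L^\infty$; with any finite exponent $q$ the Sobolev constant blows up and the loop does not close with only $V\in L^2$. The weighted Beurling step is also suspect: the weight $|z|^{-2\lambda_j}$ is not in $A_2$ once $\lambda_j>1$, so the bound $\|\,|z|^{-\lambda_j}\partial w\,\|_{L^2}\lesssim\|\,|z|^{-\lambda_j}\bar\partial w\,\|_{L^2}$ is not available, and hence you do not control the full gradient of $|z|^{-\lambda_j-1}w$ from the $\bar\partial$ side alone. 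To repair the argument you would need a genuine $L^p$--$L^q$ Carleman inequality for $\bar\partial$ with some fixed $1<p<2<q<\infty$, proved directly (for instance via mapping properties of a weighted Cauchy transform) rather than bootstrapped from the $L^2$ estimate; only then can the H\"older absorption run with all exponents strictly inside $(1,\infty)$.
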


\begin{proof}[Proof of Theorem \ref{main}:]
%When $n=1$,  first $u $ is continuous on $\Omega$ as a consequence of the Sobolev embedding theorem and the fact that $u\in H^1_{loc}(\Omega)$. If $u\not\equiv 0$, there would exists some interval, say, $(a, b)\subset \Omega$ such that $u(a)=0$ and $u\ne 0$ on $(a, b)$.
The $n=2$ case follows from Proposition \ref{pz} and the trivial fact that  $ |\bar \partial u|\lesssim  |\nabla u| $. When $n\ge3$, without loss of generality  assume $x_0=0$. Fix $r\in (0, 1)$  so that \begin{equation}\label{C}
   \left( \int_{B_{2r}}|V(x)|^ndv \right)^{\frac{2}{n}}<\frac{1}{2C_0C_1^2},
\end{equation}
where $C_0$ is the constant in Lemma \ref{in}, and $C_1$ is the constant in the Gagliardo-Nirenberg-Sobolev inequality:
$$ \|f\|_{L^{\frac{2n}{n-2} }(\mathbb R^n) }\le C_1 \|\nabla f\|_{L^2(\mathbb R^n)}, \ \ \text{for all}\ \ f\in H^1(\mathbb R^n). $$
We shall show that $u=0$ in $B_{\frac{r}{2}}$. Thus, applying a standard propagation argument we obtain $u\equiv 0$ on $\Omega$.

Choose  $\eta\in C_c^\infty(\mathbb R^n)$   such that  $0\le \eta\le 1$, $\eta =1$ on $B_r$,  $\eta =0$  outside $B_{2r}$, and $|\nabla \eta|\le \frac{2}{r}$ on $B_{2r}\setminus  B_r$. Let $\psi\in  C^\infty(\mathbb R^n)$ be  such that $  0\le \psi\le 1$, $\psi =0$ in $B_1$,  $\psi =1$ outside $B_2$, and $|\nabla \psi|\le 2$ on $B_{2}\setminus {B_1}$.  For each $k\ge \frac{4}{r}$ (then $ \frac{2}{k}\le\frac{r}{2}$),  let $\psi_k(x) = \psi(kx), x\in\mathbb R^n$.  Defining $u^{(k)}=\psi_k \eta u$, note that $u^{(k)}\in H^1(\mathbb R^n)$ and is supported inside $B_{2r}\setminus B_{\frac{1}{k}}$.  Then for each $k\ge \frac{4}{r}$ and $\lambda> \frac{n}{2}$,
\begin{eqnarray}
         &&\int_{B_{2r}}\frac{|\nabla u^{(k)}(x)|^2}{|x|^{2\lambda-2}}dv\nonumber\\
         &\lesssim & \int_{B_{2r}}\frac{|\psi_k(x) \eta(x) |^2 | \nabla u(x)|^2}{|x|^{2\lambda-2}}dv  + \int_{B_r}\frac{|\nabla \psi_k(x)|^2 |  u(x)|^2}{|x|^{2\lambda-2}}dv + \int_{B_{2r}\setminus B_r}\frac{|\nabla \eta(x)  |^2 |  u(x)|^2}{|x|^{2\lambda-2}}dv \nonumber\\
        &\le & \int_{B_{2r}}\frac{|V(x)|^2 |\psi_k(x) \eta(x)  u(x)|^2}{|x|^{2\lambda-2}}dv  + \int_{B_r}\frac{|\nabla \psi_k(x)|^2 |  u(x)|^2}{|x|^{2\lambda-2}}dv + \int_{B_{2r}\setminus B_r}\frac{|\nabla \eta(x)  |^2 |  u(x)|^2}{|x|^{2\lambda-2}}dv\nonumber\\
        &\le & \left( \int_{B_{2r}}|V(x)|^ndv \right)^{\frac{2}{n}}\left(\int_{\mathbb R^n} \left(\frac{|u^{(k)}(x)|}{|x|^{\lambda-1}}\right)^{\frac{2n}{n-2}}dv  \right)^{\frac{n-2}{n}}+ \int_{B_r}\frac{|\nabla \psi_k(x)|^2 |  u(x)|^2}{|x|^{2\lambda-2}}dv\nonumber\\
        &&\ + \int_{B_{2r}\setminus B_r}\frac{|\nabla \eta(x)  |^2 |  u(x)|^2}{|x|^{2\lambda-2}}dv.\label{eq2.4}
\end{eqnarray}
Here we have used H\"older's inequality in (\ref{eq2.4}). Since  $\frac{|u^{(k)}(x)|}{|x|^{\lambda-1}}\in H^1(\mathbb R^n), n\ge 3$, making use of the Gagliardo-Nirenberg-Sobolev inequality  and Lemma \ref{in}, we get
\begin{equation*}
    \begin{split}
    \left(\int_{\mathbb R^n} \left(\frac{|u^{(k)}(x)|}{|x|^{\lambda-1}}\right)^{\frac{2n}{n-2}}dv \right)^{\frac{n-2}{n}}\le& C_1^2\int_{\mathbb R^n} \left|\nabla\left(\frac{|u^{(k)}(x)|}{|x|^{\lambda-1}}\right)\right|^2dv       \le C_0 C_1^2\int_{B_{2r}} \frac{|\nabla u^{(k)}(x)|^2 }{|x|^{2\lambda-2}}dv.
    \end{split}
\end{equation*}
This combined  with \eqref{eq2.4} and \eqref{C}   for each $k\ge \frac{4}{r}$ and $\lambda> \frac{n}{2}$ leads to
\begin{equation}\label{so}
    \int_{B_{2r}}\frac{|\nabla u^{(k)}(x)|^2}{|x|^{2\lambda-2}}dv\le 2\int_{B_r}\frac{|\nabla \psi_k(x)|^2 |  u(x)|^2}{|x|^{2\lambda-2}}dv+ 2\int_{B_{2r}\setminus B_r}\frac{|\nabla \eta(x)  |^2 |  u(x)|^2}{|x|^{2\lambda-2}}dv.
\end{equation}

Now suppose toward a contradiction that $\nabla u \not\equiv 0$ on $B_\frac{r}{2}$. Then
there   exists  $k_1>0$  such that \begin{equation}\label{co}
    M_1=   \int_{B_{\frac{r}{2}}\setminus B_{\frac{2}{k_1}}} |\nabla u(x)|^2 dv > 0.
\end{equation}
Consequently for each fixed $\lambda > \frac{n}{2}$, \begin{equation*}
  M_\lambda=   \int_{B_{\frac{r}{2}}\setminus B_{\frac{2}{k_1}}}\frac{|\nabla u(x)|^2}{|x|^{2\lambda-2}}dv > 0.
\end{equation*}
Noting that $\nabla u^{(k)} =  \nabla u$ on $B_\frac{r}{2}\setminus B_{\frac{2}{k_1}} $ for any $k\ge k_1$ by construction of $u^{(k)}$, we further have for any $k\ge k_1$,
\begin{equation}\label{co3}
    \int_{B_\frac{r}{2}}|\nabla u^{(k)}(x)|^2 dv\ge \int_{B_{\frac{r}{2}}\setminus B_{\frac{2}{k_1}}}|\nabla u(x)|^2dv= M_1 >0,
\end{equation}
and
\begin{equation}\label{co2}
    \int_{B_\frac{r}{2}}\frac{|\nabla u^{(k)}(x)|^2}{|x|^{2\lambda-2}}dv\ge \int_{B_{\frac{r}{2}}\setminus B_{\frac{2}{k_1}}}\frac{|\nabla u(x)|^2}{|x|^{2\lambda-2}}dv= M_\lambda >0.
\end{equation}

On the other hand,  by flatness of $u$ at $0$, \begin{equation}\label{fl}
    \int_{B_r}\frac{|\nabla \psi_k(x)|^2 |  u(x)|^2}{|x|^{2\lambda-2}}dv =\int_{B_{\frac{2}{k}}\setminus B_{\frac{1}{k}}}\frac{|\nabla \psi_k(x)|^2 |  u(x)|^2}{|x|^{2\lambda-2}}dv \le  4k^{2\lambda}\int_{B_{\frac{2}{k}}}| u(x)|^2dv \rightarrow 0
\end{equation} as $k\rightarrow \infty$. In particular, by \eqref{co2} one can get some $k_\lambda>k_1$  such that
$$ \int_{B_r}\frac{|\nabla \psi_{k_\lambda}(x)|^2 |  u(x)|^2}{|x|^{2\lambda-2}}dv\le \frac{M_\lambda}{4} \le \frac{1}{4} \int_{B_{r}}\frac{|\nabla u^{(k_\lambda)}(x)|^2}{|x|^{2\lambda-2}}dv.$$
Thus \eqref{so} with $k=k_\lambda$   becomes
%Hence as $k\rightarrow \infty$,
\begin{equation}\label{hh}
    \begin{split}
        &\int_{B_{2r}}\frac{|\nabla u^{(k_\lambda)}(x)|^2}{|x|^{2\lambda-2}}dv\le    4\int_{B_{2r}\setminus B_r}\frac{|\nabla \eta(x)  |^2 |  u(x)|^2}{|x|^{2\lambda-2}}dv.
    \end{split}
\end{equation}
Since $$ \int_{B_{2r}}\frac{|\nabla u^{(k_\lambda)}(x)|^2}{|x|^{2\lambda-2}}dv\ge \int_{B_{\frac{r}{2}}}\frac{|\nabla u^{(k_\lambda)}(x)|^2}{|x|^{2\lambda-2}}dv\ge \left(\frac{2}{r}\right)^{2\lambda-2}\int_{B_{\frac{r}{2}}}|\nabla u^{(k_\lambda)}(x)|^2dv $$
and $$\int_{B_{2r}\setminus B_r}\frac{|\nabla \eta(x)  |^2 |  u(x)|^2}{|x|^{2\lambda-2}}dv \le \frac{1}{r^{2\lambda-2}}\int_{B_{2r}\setminus B_r}|\nabla \eta(x)  |^2 |  u(x)|^2dv,$$
we obtain from \eqref{hh} that
$$  2^{2\lambda-4}\int_{B_{\frac{r}{2}}}|\nabla u^{(k_\lambda)}(x)|^2dv\le \int_{B_{2r}\setminus B_r}|\nabla \eta(x)  |^2 |  u(x)|^2dv.  $$
Letting $\lambda\rightarrow \infty$ and making use of the fact that $u\in H^1_{loc}(\Omega)$, we see that
$$ \int_{B_{\frac{r}{2}}}|\nabla u(x)|^2dv =0.  $$
But this %together with \eqref{co3} would give $ \int_{B_\frac{r}{2}\setminus B_{\frac{2}{k_1}}}|\nabla u(x)|^2dv =0, $ which
 contradicts \eqref{co}!  We thus have $\nabla u \equiv 0$ on $B_\frac{r}{2}$. By flatness of  $u$ at $0$,  $u$ must be zero on $B_\frac{r}{2}$. 
\end{proof}

\begin{example}\label{ex1.2} 
Given  $0<\varepsilon<\frac{n-1}{n}, n\ge 2$, consider the differential equation $$ |\nabla u| = V|u| \ \ \text{on}\ \  B_{\frac{1}{2}},$$ where $$V=  \frac{\varepsilon(-\log |x|)^{\varepsilon-1}}{|x|}\ \ \text{on}\ \ B_{\frac{1}{2}}.$$
It is straightforward  to verify that $V\in  L^n(B_{\frac{1}{2}})$. As a consequence of Theorem \ref{main}, every nonconstant $H^1 $ solution must vanish to finite order in the $L^2$ sense at each point in $B_{\frac{1}{2}} $. 

 On the other hand, the function $$u_0(x) =e^{-{(-\log |x|)^\varepsilon}}$$ (extended to the origin by $u_0(0)=0$) is continuous on $B_{\frac{1}{2}} $, and smooth on $B_{\frac{1}{2}}\setminus\{0\}$.  Moreover,  $u_0\in H^1( B_{\frac{1}{2}}) $ and is a solution of $|\nabla u|=V u$ a.e.\ on $B_{\frac{1}{2}}$. Note that there is no contradiction with Theorem \ref{main} since $u_0$ vanishes to finite order in the $L^2$ sense everywhere in $B_{\frac{1}{2}} $.
\end{example}

When $V\in L^p$, $p<n$,  the   unique continuation property fails in general as seen below.

\begin{example}\label{ex}
For each $1\le p<n$, and  $0<\epsilon < \frac{n-p}{p}$ (so that $(\epsilon+1)p<n$), $$u(x) = e^{-\frac{1}{|x|^\epsilon}} $$ (extended to the origin by $u(0)=0$) is a smooth function on $B_1$ and  vanishes to infinite order at $0$. Moreover, the function $u$ satisfies $ |\nabla  u|\le V|u|$  on $B_1$ with
$$V = \frac{\epsilon}{|x|^{\epsilon+1}}\in L^p(B_1).$$
\end{example}

On the other hand, the following theorem states that for some special potentials in the form of multiples of $\frac{1}{|x|}$,   the   unique continuation property can still hold. Note  that $\frac{1}{|x|}\notin L^n_{loc} $.

\begin{theorem}\label{main2}
Let $\Omega$ be a  domain in $\mathbb R^n, n\ge 1$. Suppose $u=(u_1, \ldots, u_M): \Omega\rightarrow \mathbb R^M$ with $u\in H^1_{loc}(\Omega)$  and satisfies $ |\nabla u|\le \frac{C}{|x|}|u|$ a.e.\ for some constant $C>0$.  If $u$ vanishes to infinite order at some $x_0\in \Omega$ in the $L^2$ sense, then $u$ vanishes identically.
\end{theorem}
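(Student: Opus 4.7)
The plan is to adapt the strategy of the proof of Theorem \ref{main}, with a crucial simplification: the potential $V=C/|x|$ has exactly the homogeneity appearing on the right-hand side of the Hardy-type inequality \eqref{ha}, so I can close the estimate using Lemma \ref{hardy} alone, with no appeal to the Gagliardo-Nirenberg-Sobolev inequality. This is what lets unique continuation survive even though $C/|x|\notin L^n_{loc}$, and also why the result holds uniformly for all $n\ge 1$.

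After translating so that $x_0=0$ and fixing $r>0$ with $\overline{B_{2r}}\subset\Omega$, I would use the same cutoffs $\eta$ and $\psi_k$ constructed in the proof of Theorem \ref{main}, setting $u^{(k)}=\psi_k\eta u\in H^1(\mathbb R^n)$ with support in $B_{2r}\setminus B_{1/k}$. Applying Lemma \ref{hardy} componentwise, for each $\lambda>n/2$,
\begin{equation*}
\int_{\mathbb R^n}\frac{|u^{(k)}|^2}{|x|^{2\lambda}}\,dv \;\le\; \frac{4}{(2\lambda-n)^2}\int_{\mathbb R^n}\frac{|\nabla u^{(k)}|^2}{|x|^{2\lambda-2}}\,dv.
\end{equation*}
Expanding $\nabla u^{(k)}=\psi_k\eta\,\nabla u+u\,\nabla(\psi_k\eta)$ and invoking the differential inequality in the form $|\psi_k\eta\,\nabla u|^2\le C^2|u^{(k)}|^2/|x|^2$, the right side splits into one term comparable to $C^2\!\int|u^{(k)}|^2/|x|^{2\lambda}\,dv$ and two error terms involving $\nabla\psi_k$ and $\nabla\eta$. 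Choosing $\lambda$ large enough (depending only on $n$ and $C$) so that the factor of the first term is below $1/2$, I can absorb it into the left side and obtain
\begin{equation*}
\int\frac{|u^{(k)}|^2}{|x|^{2\lambda}}\,dv \;\lesssim\; \frac{1}{(2\lambda-n)^2}\!\left(\int\frac{|\nabla\psi_k|^2|u|^2}{|x|^{2\lambda-2}}\,dv+\int\frac{|\nabla\eta|^2|u|^2}{|x|^{2\lambda-2}}\,dv\right).
\end{equation*}

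The endgame is then the same as in Theorem \ref{main}: the $\nabla\psi_k$ error vanishes as $k\to\infty$ by the flatness hypothesis exactly as in \eqref{fl}; the $\nabla\eta$ error is bounded by a fixed constant times $r^{-2\lambda}\int_{B_{2r}\setminus B_r}|u|^2\,dv$; and the left-hand side on $B_{r/2}$, where $u^{(k)}=u$ once $k$ is large, is at least $(2/r)^{2\lambda}\int_{B_{r/2}}|u|^2\,dv$. Sending $k\to\infty$ for fixed $\lambda$ and then $\lambda\to\infty$ pits an exponential factor $2^{2\lambda}$ on the left against polynomial decay $(2\lambda-n)^{-2}$ on the right, forcing $\int_{B_{r/2}}|u|^2\,dv=0$. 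A standard connectedness propagation argument then gives $u\equiv 0$ on all of $\Omega$.

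The main point to be careful about is the absorption threshold: one must verify that the required $\lambda$ depends only on $n$ and $C$, not on $u$ or $r$, so that the subsequent limit $\lambda\to\infty$ is legitimate and the exponential gain truly defeats the polynomial loss. Beyond this bookkeeping, the argument is a clean structural analogue of the proof of Theorem \ref{main}, with the scaling-matched potential $C/|x|$ eliminating the need for any Sobolev-type input.
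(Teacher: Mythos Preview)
Your proposal is correct and follows essentially the same approach as the paper's proof. Both apply Lemma \ref{hardy} to $u^{(k)}=\psi_k\eta u$, exploit the homogeneity of $C/|x|$ to absorb the main term once $\lambda$ is large enough (the paper makes the threshold $\lambda>\tfrac{n}{2}+\sqrt{2}C$ explicit), kill the $\nabla\psi_k$ error via flatness as $k\to\infty$, and then send $\lambda\to\infty$ to force $u\equiv 0$ on $B_{r/2}$.
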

\begin{proof}
Assume $x_0=0$ and consider $u^{(k)} = \psi_k\eta u$, where  $\psi_k$ and $\eta$ are defined as in the proof of Theorem \ref{main}.
Then by Lemma \ref{hardy},
\begin{eqnarray*}
     &&\int_{B_{2r}}\frac{|u^{(k)}(x)|^2}{|x|^{2\lambda}}dv\\
     &\le& \frac{4}{(2\lambda-n)^2}\int_{B_{2r}}\frac{|\nabla u^{(k)}(x)|^2}{|x|^{2\lambda-2}}dv\\
     &  \lesssim & \frac{4}{(2\lambda-n)^2}\int_{B_{2r}}\frac{|\psi_k(x) \eta(x) |^2 | \nabla u(x)|^2}{|x|^{2\lambda-2}}dv  + \frac{4}{(2\lambda-n)^2}\int_{B_r}\frac{|\nabla \psi_k(x)|^2 |  u(x)|^2}{|x|^{2\lambda-2}}dv\\  &&\ +\frac{4}{(2\lambda-n)^2} \int_{B_{2r}\setminus B_r}\frac{|\nabla \eta(x)  |^2 |  u(x)|^2}{|x|^{2\lambda-2}}dv \\
    &    \le &  \frac{4C^2}{(2\lambda-n)^2}\int_{B_{2r}}\frac{ |u^{(k)}(x)|^2}{|x|^{2\lambda}}dv  +  \frac{4}{(2\lambda-n)^2}\int_{B_r}\frac{|\nabla \psi_k(x)|^2 |  u(x)|^2}{|x|^{2\lambda-2}}dv\\
     &   &\ +  \frac{4}{(2\lambda-n)^2}\int_{B_{2r}\setminus B_r}\frac{|\nabla \eta(x)  |^2 |  u(x)|^2}{|x|^{2\lambda-2}}dv.
\end{eqnarray*}
Here we used the inequality  $|\nabla u|\le \frac{C}{|x|}|u| $ in the first term of the last inequality. When $ \frac{4C^2}{(2\lambda-n)^2}\le \frac{1}{2} $ (equivalently, when $ \lambda>\frac{n}{2}+\sqrt 2C$),  one can move this first term to the left hand side and get
$$ \int_{B_{2r}}\frac{|u^{(k)}(x)|^2}{|x|^{2\lambda}}dv  \le \frac{8}{(2\lambda-n)^2}\int_{B_r}\frac{|\nabla \psi_k(x)|^2 |  u(x)|^2}{|x|^{2\lambda-2}}dv+\frac{8}{(2\lambda-n)^2}\int_{B_{2r}\setminus B_r}\frac{|\nabla \eta(x)  |^2 |  u(x)|^2}{|x|^{2\lambda-2}}dv. $$

Letting $k\rightarrow \infty$ and making use of the flatness of $u$ with a similar argument as in \eqref{fl}, we obtain
\begin{equation*}
    \int_{B_{2r}}\frac{|u(x)|^2}{|x|^{2\lambda}}dv  \le\frac{16}{(2\lambda-n)^2}\int_{B_{2r}\setminus B_r}\frac{|\nabla \eta(x)  |^2 |  u(x)|^2}{|x|^{2\lambda-2}}dv.
\end{equation*}
  Since
  $$  \int_{B_{2r}}\frac{|u(x)|^2}{|x|^{2\lambda}}dv \ge  \int_{B_{\frac{r}{2}}}\frac{|u(x)|^2}{|x|^{2\lambda}}dv \ge\left(\frac{2}{r}\right)^{2\lambda}\int_{B_{\frac{r}{2}}}| u(x)|^2dv $$
and $$\int_{B_{2r}\setminus B_r}\frac{|\nabla \eta(x)  |^2 |  u(x)|^2}{|x|^{2\lambda-2}}dv \le \frac{1}{r^{2\lambda-2}}\int_{B_{2r}\setminus B_r}|\nabla \eta(x)  |^2 |  u(x)|^2dv,$$
we have
$$\int_{B_{\frac{r}{2}}}| u(x)|^2dv \le \frac{r^2}{(2\lambda-n)^22^{2\lambda-4}}\int_{B_{2r}\setminus B_r} |\nabla \eta(x)  |^2 |  u(x)|^2dv. $$
Letting $\lambda\rightarrow \infty$, we see $u\equiv 0$ on $B_\frac{r}{2}$.
\end{proof}

 Although  the unique continuation property for \eqref{eq} fails for general $ L^p$, $p<n$, potentials as demonstrated in Example \ref{ex}, the following theorem shows that if the potential is in $ L^2 $ then the weak continuation property holds. Recall that weak unique continuation for a differential (in)equality is the property that every solution  that vanishes in an open subset vanishes identically.

\begin{theorem}\label{wucp}
    Let $\Omega$ be a  domain in $\mathbb  R^n, n\ge 2$, and let $V\in L_{loc}^2(\Omega)$. Suppose $u=(u_1, \ldots, u_M): \Omega\rightarrow \mathbb R^M$ with $u\in H^1_{loc}(\Omega)$  and satisfies $ |\nabla  u|\le V|u|$ on $\Omega$.  If $u $ vanishes  in an open subset of $ \Omega$,   then $u$ vanishes identically.
\end{theorem}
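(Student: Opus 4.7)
My plan is to combine the connectedness of $\Omega$ with a Gronwall-type ODE argument along line segments, bypassing the Carleman machinery of Theorem \ref{main}. Let $S \subset \Omega$ denote the set of points admitting a neighborhood on which $u \equiv 0$ a.e. By construction $S$ is open, and it is nonempty by hypothesis; since $\Omega$ is connected it suffices to prove $S$ is relatively closed. Fix $x_0 \in \overline{S} \cap \Omega$, choose $x_1$ near $x_0$ with $B(x_1,\rho) \subset S$, and pick $r > 0$ small enough that the convex hull of $B(x_0, r) \cup B(x_1,\rho)$ lies in $\Omega$.

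For each pair $(y,z) \in B(x_1,\rho) \times B(x_0,r)$, consider the segment $\ell_{y,z}(t) := y + t(z-y)$, $t \in [0,1]$. Using the ACL characterization of $H^1_{loc}$, the local $L^2$ integrability of $V$, and Fubini's theorem applied in suitable coordinates on the space of segments, for Lebesgue-a.e. pair $(y,z) \in B(x_1,\rho) \times B(x_0,r)$ one has: (i) $u \circ \ell_{y,z}$ is absolutely continuous on $[0,1]$ with a.e.\ derivative $(z-y)\cdot(\nabla u)(\ell_{y,z}(t))$; (ii) $V \circ \ell_{y,z} \in L^1([0,1])$; (iii) the inequality $|\nabla u| \le V|u|$ holds a.e.\ along the segment; and (iv) $u(y) = 0$.

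For such a pair the function $h(t) := |u(\ell_{y,z}(t))|^2$ is absolutely continuous on $[0,1]$ with $h(0) = 0$, and Cauchy--Schwarz applied to the components of $u$ gives
\[
|h'(t)| \le 2|z-y|\,V(\ell_{y,z}(t))\,h(t) \quad \text{a.e.\ on } [0,1].
\]
Gronwall's inequality then forces $h \equiv 0$, so in particular $u(z) = 0$. A second application of Fubini yields $u = 0$ a.e.\ on $B(x_0, r)$, so $x_0 \in S$, completing the proof that $S$ is closed and hence equal to $\Omega$.

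The main obstacle is the Fubini/ACL bookkeeping: one must verify that the set of pairs $(y,z)$ for which the segment $\ell_{y,z}$ fails any of (i)--(iv) is Lebesgue-null in $\mathbb{R}^{2n}$. This is handled by parameterizing segments by direction plus transverse base point and invoking the standard ACL characterization of $H^1$ and Fubini for $V$ in each fixed direction, before reassembling via a further Fubini in the direction variable. Since the only use of $V$ is its integrability along almost every short segment, the hypothesis $V \in L^2_{loc}$ is stronger than what the argument requires; it is kept to match the natural $L^2$ setting of $|\nabla u|$.
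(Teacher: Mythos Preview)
Your argument is correct and takes a genuinely different route from the paper. The paper derives the $n=2$ case from Theorem~\ref{main}, whose proof in that dimension rests on the complex-analytic Proposition~\ref{pz} for $\bar\partial$, and then handles $n\ge3$ by induction on dimension: slice by hyperplanes $\{x_n=c\}$, use Fubini to see that for a.e.\ $c$ the restricted data still satisfy the $(n-1)$-dimensional hypotheses, and apply the inductive step. Your approach is instead a direct, purely real-variable ODE argument, running Gronwall along line segments from the known zero region to nearby target points. What this buys is self-containment and uniformity: it avoids the $\bar\partial$ input altogether, treats all $n\ge2$ at once with no induction, and, as you note, actually requires only $V\in L^1_{loc}$ so that $V$ is integrable along a.e.\ segment. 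The paper's route has the virtue of reusing machinery already developed in the article. One small comment on the bookkeeping: the sentence ``so in particular $u(z)=0$'' tacitly identifies the endpoint value of the absolutely continuous representative along the segment with the pointwise value $u(z)$, which is not automatic. The clean fix is to observe that $h\equiv0$ gives $u\circ\ell_{y,z}=0$ a.e.\ on $[0,1]$, and then recover $u=0$ a.e.\ on a ball about $x_0$ via the change of variables $(y,z,t)\mapsto(1-t)y+tz$ and Fubini (the Jacobian in $z$ is $t^n$, bounded away from zero for $t$ near $1$). This is a routine adjustment entirely in the spirit of the final paragraph of your write-up.
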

\begin{proof}  The $n=2$ case is a direct consequence of Theorem \ref{main}, since  the (strong) unique continuation implies the weak unique continuation property. We shall show below when $n = 3$,  for any two domains $D_1, D_2$ in $\mathbb R^2$ with $D_1\subseteq D_2$, and $s>0$, if $u$ satisfies \eqref{eq} on the product domain $D_2 \times (-s, s) $ and $ u= 0$ on $D_1\times (-s, s)$, then $u =0 $ on $D_2\times (-s, s)$. If so, then $u\equiv 0$ with a standard propagation argument. The proof for $n\ge 3$ cases follows from an induction.

Since $V\in L_{loc}^2( D_2  \times (-s, s)  )$, by Fubini's theorem, for almost every $x_3\in (-s, s) $, $V(\cdot, x_3)\in L^2_{loc}(D_2)$, and similarly  $u(\cdot, x_3)\in H^1_{loc}(D_2)$. Restricting \eqref{va} at each such $x_3= c_3\in (-s, s)$, we have  $v= u(\cdot, c_3)$ satisfies
\begin{equation*}
    |\nabla v| =\left(\sum_{k=1}^M|\partial_{x_1}u_k(\cdot, c_3)|^2+ |\partial_{x_2}u_k(\cdot, c_3)|^2\right)^{\frac{1}{2}}\le |\nabla u(\cdot, c_3)| \le V(\cdot, c_3)|u(\cdot, c_3)| =V(\cdot, c_3)|v| 
\end{equation*}
 on $D_2$ and  $v= 0 $ on $D_1$. Applying the $n=2$ case we have $v= 0$ on $D_2$. Thus $u = 0$ on $D_2\times (-s, s)$.
\end{proof}

\section{Uniqueness for Lipschitz functions}\label{sec3}

In this section, we focus on locally Lipschitz functions whose definition is given below.

\begin{definition}\label{def2.1}
  A function $u:\Omega\to\re$ is said to be {\underline{locally Lipschitz}} on $\Omega$ means that for any point $p\in\Omega$,  there is some neighborhood $p\in U_p\subseteq\Omega$ and some constant $C_p$ so that for all $x,y\in U_p$, $|u(y)-u(x)|<C_p|y-x|$. The function $u$ is {\underline{Lipschitz}} on $\Omega$ means that there exists a constant $C$ such  that for all $x,y\in \Omega$, $|u(y)-u(x)|<C|y-x|$.
\end{definition}
%We recall some fundamental facts about  Lipschitz functions.
According to Rademacher's Theorem, if $u$ is locally Lipschitz on $\Omega$, then $\nabla u$ is defined a.e.\ on $\Omega$.  See, for instance, \cite[pp. 296]{Ev}.    Moreover,

\begin{proposition}\cite[pp. 294]{Ev}\label{prop2.4} Let $\Omega$ be a domain in $\mathbb R^n.$ Then
  %For an open subset $\Omega\subseteq\re^n$, and a function $u:\Omega\to\re$,
 $u$ is locally Lipschitz on $\Omega$ if and only if $u\in W^{1, \infty}_{loc}(\Omega)$.  \boxx
 \end{proposition}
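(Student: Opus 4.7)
The plan is to prove the two implications separately, drawing on Rademacher's theorem for the forward direction and on a mollification-plus-fundamental-theorem-of-calculus argument for the reverse.

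For the forward implication, I would start with a locally Lipschitz $u$ and a compact $K\subset\Omega$. By compactness, $K$ is covered by finitely many Lipschitz neighborhoods $U_{p_i}$; a standard chaining argument along a thickening of $K$ lets me replace these by a single Lipschitz constant $C_K$ on an open set $K'\supset K$ with $\overline{K'}\subset\Omega$. Rademacher's theorem then gives $u$ differentiable a.e.\ on $K'$ with classical gradient bounded by $C_K$, so the candidate gradient lies in $L^\infty_{loc}(\Omega)$. The remaining (and only substantive) step is to identify this a.e.\ classical gradient with the Sobolev weak gradient. My preferred route is the absolutely-continuous-on-lines characterization: since $u$ is Lipschitz on $K'$, it is absolutely continuous on a.e.\ line parallel to each coordinate axis, so the classical one-variable FTC together with Fubini produces the integration-by-parts identity $\int u\,\partial_{x_i}\varphi\,dx = -\int (\partial_{x_i} u)\,\varphi\,dx$ for every $\varphi\in C_c^\infty(\Omega)$.

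For the reverse implication, fix $p\in\Omega$ and choose $\delta>0$ with $\overline{B_{2\delta}(p)}\subset\Omega$. Set $M:=\|\nabla u\|_{L^\infty(B_{2\delta}(p))}$. For the mollification $u_\varepsilon = u\ast\rho_\varepsilon$, standard properties give $u_\varepsilon\in C^\infty(B_{2\delta-\varepsilon}(p))$ with $\nabla u_\varepsilon = (\nabla u)\ast\rho_\varepsilon$, whence $\|\nabla u_\varepsilon\|_{L^\infty(B_\delta(p))}\le M$ once $\varepsilon<\delta$. For any $x,y\in B_\delta(p)$ the segment $[x,y]$ lies in the domain of smoothness, so
$$u_\varepsilon(y)-u_\varepsilon(x) = \int_0^1 \nabla u_\varepsilon(x+t(y-x))\cdot (y-x)\,dt,$$
yielding $|u_\varepsilon(y)-u_\varepsilon(x)|\le M|y-x|$. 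Passing $\varepsilon\to 0^+$ at Lebesgue points gives the estimate on a full-measure set, and choosing the continuous (in fact Lipschitz) representative of $u$ extends the bound to every $x,y\in B_\delta(p)$, proving local Lipschitz continuity at $p$.

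The main obstacle I expect is not deep but rather the bookkeeping around Rademacher's theorem: equating the a.e.-classical gradient with the Sobolev weak gradient requires the ACL characterization (or, alternatively, a careful mollification-and-limit argument justifying that one may commute the derivative with convolution and that the mollified gradients converge in $L^1_{loc}$ to the classical gradient). Once this identification is secured, both implications reduce to quantitative forms of the fundamental theorem of calculus along line segments.
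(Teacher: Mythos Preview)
The paper does not prove this proposition at all: it is stated with a citation to Evans \cite[pp.~294]{Ev} and closed immediately with the QED symbol, so there is nothing to compare your argument against on the paper's side. Your proposal is a correct and standard proof of this well-known equivalence; the forward direction via Rademacher together with the ACL characterization, and the reverse via mollification and the fundamental theorem of calculus along segments, are exactly the arguments one finds in Evans and elsewhere. One small wording quibble: in the last step you write ``choosing the continuous (in fact Lipschitz) representative,'' which reads as assuming what you want to prove; it is cleaner to say that the a.e.\ Lipschitz bound forces $u$ to agree a.e.\ with a uniformly continuous function, whose unique continuous extension to the full ball then inherits the same Lipschitz constant.
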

Following the convention of \cite{Ev}, even for $u\in W^{1, \infty}_{loc}(\Omega)$ defined a.e.\ in $\Omega$ or with some measure zero set of discontinuities, there is a unique continuous function agreeing with $u$ a.e., which we will also denote $u$.
    %   \item For every compact subset $K\subseteq\Omega$ there is a constant $C_K$ so that for all $x,y\in K$, $|u(y)-u(x)|<C_K|y-x|$ (i.e., $u$ is Lipschitz on $K$).
%\begin{proof}
 %   {\textcolor{darkred}{needs reference}}  The notation $\nabla u\in L^{\infty}_{loc}(\Omega)$ refers to each component $u_{x_j}$ having the $L^{\infty}_{loc}$ property, considered as a distributional derivative on $\Omega$.
%\end{proof}

%\begin{proof}
 %   See \cite{rudin} Theorem 7.20.  The existence a.e.\ of $u^\prime$ on $(0,1)$ is from Rademacher's Theorem.
%\end{proof}

%
%On the other hand, any Lipschitz function on a set of a metric %measure space can be extended to be a Lipschitz function over the %whole space with the same Lipschitz constant; in particular, for our %purpose in this paper, we can always use the extension of a Lipschitz%\ function $u$ in $\Omega$ to one in $\mathbb{R}^n$ with the same Lipschitz constant (\cite{H}). {\color{darkred} [not used until an application in section 4]}

%%The application of the above fact has been commented-out

% Throughout, we denote the gradient $\nabla f$ of a Lipschitz function $f$, which exists a.e. by Rademacher's theorem, and its norm $|\nabla f|^2=f^2_{x_1}+...+f^2_{x_n}$. Also, it is a well-known fact that a (locally integrable) function is locally Lipschitz if and only if its distributional gradient $\nabla f$ is locally $L^{\infty}$.
%
To prove Theorem \ref{thm1.1}, we begin with a uniqueness property of Lipschitz functions in one real variable on an interval, making use of  the following fundamental theorem of calculus for Lipschitz functions. 

\begin{proposition}\cite[Theorem 7.20, Fundamental Theorem of Calculus]{rudin} \label{prop2.5}
    If $u:[0,1]\to\re$ is Lipschitz on $[0,1]$, then for any $0\le a<b\le1$, $$u(b)-u(a)=\int_a^bu^\prime(t)dt.$$ \boxx
\end{proposition}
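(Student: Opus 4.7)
The plan is to reduce the statement to two ingredients: pointwise a.e.\ differentiability of $u$, and a dominated convergence argument for integrated difference quotients. Let $C$ denote the Lipschitz constant of $u$ on $[0,1]$, and extend $u$ continuously to $[0,2]$ by setting $u(x):=u(1)$ for $x\in(1,2]$; the extension is still Lipschitz with constant $C$. For small $h>0$, define the difference quotient
\begin{equation*}
u_h(x):=\frac{u(x+h)-u(x)}{h},\qquad x\in[a,b].
\end{equation*}
The Lipschitz hypothesis gives the uniform bound $|u_h(x)|\le C$.

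First I would establish pointwise a.e.\ differentiability of $u$ by writing
\begin{equation*}
u(x)=\tfrac{1}{2}\bigl((u(x)+Cx)-(Cx-u(x))\bigr),
\end{equation*}
and observing that both $u(x)+Cx$ and $Cx-u(x)$ are nondecreasing on $[0,2]$, since the Lipschitz inequality $|u(y)-u(x)|\le C|y-x|$ forces each increment to be nonnegative. Lebesgue's classical theorem on differentiation of monotone functions then implies that each summand, and hence $u$ itself, is differentiable a.e.\ on $[0,1]$, with $|u'|\le C$ a.e. At every such point of differentiability, $u_h(x)\to u'(x)$ as $h\to 0^+$.

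Next I would compute $\int_a^b u_h$ by the translation-invariance of Lebesgue measure, which cancels the interior contribution:
\begin{equation*}
\int_a^b u_h(x)\,dx=\frac{1}{h}\int_b^{b+h}u(x)\,dx-\frac{1}{h}\int_a^{a+h}u(x)\,dx.
\end{equation*}
Continuity of $u$ makes each averaged integral on the right converge to $u(b)$ and $u(a)$ respectively, so the right-hand side tends to $u(b)-u(a)$ as $h\to 0^+$. On the left, the uniform bound $|u_h|\le C$ together with $u_h\to u'$ a.e.\ lets Lebesgue's dominated convergence theorem apply, giving $\int_a^b u_h\to \int_a^b u'(t)\,dt$. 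Equating the two limits finishes the proof.

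The main obstacle is the a.e.\ differentiability step, which is Lebesgue's theorem for monotone functions and ultimately rests on a Vitali-type covering argument; everything else amounts to a short manipulation of the difference quotient. Once that ingredient is available, the two computations of $\lim_{h\to 0^+}\int_a^b u_h$ fit together cleanly to produce the identity.
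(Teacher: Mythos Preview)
Your argument is correct and is the standard textbook route: decompose the Lipschitz function as a difference of monotone functions to invoke Lebesgue's differentiation theorem, then compute $\lim_{h\to 0^+}\int_a^b u_h$ two ways, once by the telescoping identity and continuity, once by dominated convergence. Note, however, that the paper does not give its own proof of this proposition; it is stated with a citation to Rudin's \emph{Real and Complex Analysis} (Theorem~7.20) and closed immediately with the end-of-proof symbol. There is therefore nothing in the paper to compare your approach against---the authors treat this as a black-box classical fact. Your write-up is a perfectly acceptable self-contained substitute for that citation.
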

%which is tailored for applications in polar coordinates in order to prove the results in this paper.
%Recall that a Lipschitz function in $[0,1]$ is differentiable almost everywhere and satisfies $f(b)-f(a)=\int_a^b f'(x)dx$ for any $a, b\in [0,1]$.
\begin{lemma}\label{lem3.1}
Let $\varphi:[0,1]\to\re$ be Lipschitz on $[0,1]$, with $\varphi(0)=0$. If there exist $p\geq 1$ and a non-negative function $\lambda:[0,1]\to\re$ with $\lambda\in L^p([0,1])$ such that for a.e.\ $x\in(0,1)$,
\begin{equation}\label{h}
|\varphi^\prime(x)|\leq \lambda(x)\,|\varphi(x)|\, x^{\frac{1-p}{p}},
\end{equation}
then $\varphi\equiv 0$ in $[0,1]$.
\end{lemma}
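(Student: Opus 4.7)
The plan is to convert the pointwise differential inequality into an integral one via the fundamental theorem of calculus for Lipschitz functions (Proposition \ref{prop2.5}), and then run a Gr\"onwall-type argument carefully tailored to the singular weight $t^{(1-p)/p}$. Since $\varphi(0)=0$, I would first record
$$|\varphi(x)|\le\int_0^x\lambda(t)|\varphi(t)|t^{(1-p)/p}\,dt. \qquad (\ast)$$
When $p=1$ the weight disappears and $\lambda\in L^1$, so $(\ast)$ is the classical Gr\"onwall inequality and gives $\varphi\equiv 0$ at once.

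The substantive case is $p>1$. My idea is to apply H\"older's inequality to $(\ast)$ with conjugate exponent $q=p/(p-1)$; the key arithmetic observation is the identity $q(1-p)/p=-1$, which collapses the singular weight to $1/t$:
$$|\varphi(x)|^q\le \|\lambda\|_{L^p([0,x])}^q\int_0^x\frac{|\varphi(t)|^q}{t}\,dt.$$
Writing $H(x)=\int_0^x|\varphi(t)|^q/t\,dt$ and $A(x)=\|\lambda\|_{L^p([0,x])}^q$, this becomes $xH'(x)\le A(x)H(x)$. The Lipschitz bound $|\varphi(t)|\le Lt$ (which follows from $\varphi(0)=0$) gives $H(x)\le L^qx^q/q$, and so $G(x):=H(x)/x$ is nonnegative with $G(x)\to 0$ as $x\to 0^+$. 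A short computation converts the estimate on $H$ into $G'(x)\le (A(x)-1)G(x)/x$. Because absolute continuity of the $L^p$-integral gives $A(x)\to 0$, there is some $\delta>0$ on which $A<1$, so $G'\le 0$ on $(0,\delta]$. A nonnegative, non-increasing function with limit $0$ at its left endpoint must be identically $0$, whence $H\equiv 0$ and therefore $\varphi\equiv 0$ on $[0,\delta]$.

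To extend the vanishing to all of $[0,1]$, I would use that $t^{(1-p)/p}$ is bounded above by $\delta^{(1-p)/p}$ on $[\delta,1]$, so the hypothesis reduces there to $|\varphi'|\le \tilde\lambda|\varphi|$ with $\tilde\lambda:=\delta^{(1-p)/p}\lambda\in L^1([\delta,1])$. Standard Gr\"onwall applied at the base point $\delta$, where $\varphi$ already vanishes, then yields $\varphi\equiv 0$ on $[\delta,1]$ and completes the proof.

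The main obstacle I anticipate is the non-integrability of the weight $t^{(1-p)/p}$ near $0$ when $p>1$: by H\"older's inequality the borderline exponent tested against an $L^p$ function is precisely $-1$, so a direct Gr\"onwall argument fails because the weight cannot be integrated through the singularity at the origin. Exploiting the exact identity $q(1-p)/p=-1$ after H\"older, and then converting the weighted bound on $H$ into a monotonicity statement for $H/x$, is the crucial and delicate step.
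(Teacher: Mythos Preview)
Your proof is correct and takes a genuinely different route from the paper's. The paper also starts from the fundamental theorem of calculus, but then applies H\"older in the opposite direction: it bounds $|\varphi(x)|^p\le x^{p-1}\int_\delta^x|\varphi'|^p$, multiplies by $\lambda^p(x)x^{1-p}$, integrates over $[\delta,s]$, and substitutes the hypothesis to obtain the self-referential inequality
\[
\int_\delta^s \lambda^p|\varphi|^px^{1-p}\,dx\le\Big(\int_\delta^s\lambda^p\Big)\Big(\int_\delta^s\lambda^p|\varphi|^px^{1-p}\,dx\Big),
\]
where $\delta$ is defined a priori as the supremum of the vanishing interval. Dividing through yields $1\le\int_\delta^s\lambda^p$, a contradiction as $s\searrow\delta$. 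So the paper works with the $p$-th power of $\varphi$ and an integral contradiction, while you work with the $q$-th power and a differential/monotonicity argument on the auxiliary function $G(x)=H(x)/x$; both exploit the same borderline arithmetic (in your language, $q(1-p)/p=-1$). Your approach makes the role of the small-$L^p$-norm of $\lambda$ near $0$ more transparent and cleanly separates the local step from the propagation step via a standard Gr\"onwall on $[\delta,1]$; the paper's argument is slightly more compact since the supremum-of-vanishing-set framing handles both steps at once.
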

\begin{proof}
  We note first that we can assume without loss of generality that $\lambda$ is non-vanishing. Indeed, if that is not the case, then we can just replace $\lambda$ with $1+\lambda\in L^p([0,1])$ and (\ref{h}) still holds.

Let $\delta=\sup\{d\in[0,1]\,\vert\, \varphi\equiv 0 \text{ in }[0,d]\}$. By continuity, $\varphi(\delta)=0$, and by Proposition \ref{prop2.5} (which uses the Lipschitz hypothesis), for all $x\in(0,1]$,
\begin{equation}\label{eq3.2}
  |\varphi(x)|=|\varphi(x)-\varphi(\delta)|=\left|\int_\delta^x\varphi^\prime(t)dt\right|\le\int_\delta^x\left|\varphi^\prime(t)\right|dt.
\end{equation}
The existence of the RHS integral is from Proposition \ref{prop2.4}, with $L^\infty([\delta,x])\subseteq L^p([\delta,x])\subseteq L^1([\delta,x])$.

For $p>1$, let $q$ be the conjugate exponent so that $\frac1p+\frac1q=1$.  By H\"older's inequality, we have
\begin{equation}\label{eq3.3}
\int_{\delta}^{x}|\varphi^\prime(t)|\, dt\leq\bigg(\int_{\delta}^x|\varphi^\prime(t)|^p\, dt\bigg)^{\frac{1}{p}}\bigg(\int_{\delta}^x 1^q\, dt\bigg)^{\frac{1}{q}}\le\bigg(\int_{\delta}^x|\varphi^\prime(t)|^p\, dt\bigg)^{\frac{1}{p}}x^{\frac{p-1}p}.
\end{equation}
It follows from (\ref{eq3.2}) and (\ref{eq3.3}) that for $p\ge1$,
\begin{equation}\label{1}
|\varphi(x)|^p\leq x^{p-1}\int_{\delta}^x|\varphi^\prime(t)|^p\, dt.
\end{equation}
We multiply both sides of \eqref{1} by the function $x^{1-p}\lambda^p(x)$, to obtain:
\begin{equation}\label{2}
\lambda^p(x)\,|\varphi(x)|^px^{1-p}\leq\lambda^p(x)\int_{\delta}^x|\varphi^\prime(t)|^p\, dt.
\end{equation}
Suppose toward a contradiction that $\delta<1$, and let $s\in (\delta,1)$. Integrating in the variable $x$ on both sides of \eqref{2} gives
\begin{equation}\label{3}
\int_{\delta}^s\lambda^p(x)\,|\varphi(x)|^p\, x^{1-p}\, dx\leq \int_{\delta}^s\bigg{(}\lambda^p(x)\int_{\delta}^x|\varphi^\prime(t)|^p\, dt\bigg{)}\,dx.
\end{equation}
Note that the Lipschitz property of $\varphi$ on $[0,1]$ and $\varphi(0)=0$ imply there is some constant $C$ so that $|\varphi(x)|\le C|x|$, so $|\varphi(x)|^p\, x^{1-p}$ is continuous and bounded as a function of $x$ on $(0,1]$.  Then the hypothesis $\lambda\in L^p([0,1])$ applies, so that both the LHS and RHS integrals in (\ref{3}) exist.  The inequality \eqref{3} then implies, first using $x\leq s$, and then the hypothesis (\ref{h}):
\begin{eqnarray}
\int_{\delta}^s\lambda^p(x)\,|\varphi(x)|^p\, x^{1-p}\, dx&\leq& \bigg(\int_{\delta}^s \lambda^p(x)\, dx\bigg)\bigg(\int_{\delta}^s|\varphi^\prime(x)|^p\, dx\bigg)\nonumber\\%\label{6}\\ %\leq  (\int_{\delta}^s \lambda^p(x)\, dx)\,\int_{\delta}^s x^{\frac{1-p}{p}}\,\lambda(x)\,|\varphi(x)|.
  &\leq&  \bigg(\int_{\delta}^s \lambda^p(x)\, dx\bigg)\,\bigg(\int_{\delta}^s\lambda^p(x)\,|\varphi(x)|^p\, x^{1-p}\, dx\bigg).\label{7}
\end{eqnarray}
%Since $\varphi$ is Lipschitz and $\varphi(0)=0$, we have that $|\varphi(x)|\leq C|x|$ for some $C$ for $x\in [0,1]$, and therefore the function $|\varphi(x)|^p\, x^{1-p}$ is bounded. Since $\lambda\in L^p(0,1)$, we conclude that
%\begin{equation}
%\int_{\delta}^s\lambda^p(x)\,|\varphi(x)|^p\, x^{1-p}dx<\infty \quad\quad \forall s\in(\delta,1).
%\end{equation}
 By the construction of $\delta$ as the supremum of a set where $\varphi(x)\equiv0$, we can find a sequence of points $s_j\in (\delta,1)$ so that $s_j$ is decreasing, $\displaystyle{\lim_{j\to\infty}s_j= \delta}$, and $\varphi(s_j)\ne0$.   By the continuity of $|\varphi(x)|^p\, x^{1-p}$ and the property that $\lambda^p\ge1$, the integrand $\lambda^p(x)\,|\varphi(x)|^p\, x^{1-p}$ is strictly positive in some neighborhood of $s_j$.  So, for all $j=1,2,3,\ldots$,
\begin{equation*}\label{8}
\int_{\delta}^{s_j}\lambda^p(x)\,|\varphi(x)|^p\, x^{1-p}\, dx> 0.
\end{equation*}
The inequality \eqref{7} then yields, for all $j$,
\begin{equation}\label{final}
1\leq\int_{\delta}^{s_j}\lambda^p(x)\,dx.
\end{equation}
Since $\lambda\in L^p(0,1)$, letting $s_j\to \delta$ in \eqref{final} leads to a contradiction.
\end{proof}

Recalling Rademacher's theorem that a Lipschitz function is differentiable almost everywhere, the following simple, but useful, Lemma gives a set of points where the square of a Lipschitz function is known to be differentiable.

\begin{lemma}\label{lem3.2}
 Let $u$ be a locally Lipschitz function on an open set $\Omega\subseteq\re^n$. Then $g=u^2$   is also locally Lipschitz on $\Omega$. Moreover, $g$ is differentiable wherever $u$ vanishes and in fact $\nabla g(x)=0$ there.
\end{lemma}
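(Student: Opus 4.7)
The plan is to treat the two assertions of the lemma separately, both via elementary estimates.

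For the first assertion, the idea is to factor $g(y)-g(x)=u(y)^2-u(x)^2=(u(y)-u(x))(u(y)+u(x))$ and control each factor. Given $p\in\Omega$, I would first invoke Definition \ref{def2.1} to get a neighborhood $U_p\subseteq\Omega$ of $p$ and a Lipschitz constant $C_p$ for $u$ on $U_p$. I may have to shrink $U_p$ (say to a ball with compact closure in $\Omega$) to ensure that $u$, which is continuous because locally Lipschitz, is bounded by some $M_p<\infty$ on $U_p$. Then the factoring gives
\[
|g(y)-g(x)|\;\leq\;|u(y)+u(x)|\,|u(y)-u(x)|\;\leq\;2M_p C_p\,|y-x|
\]
for all $x,y\in U_p$, proving that $g$ is locally Lipschitz on $\Omega$ with constant $2M_pC_p$ near $p$.

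For the second assertion, suppose $u(x_0)=0$, so that $g(x_0)=0$. I would show that $g$ is differentiable at $x_0$ with total derivative equal to zero by verifying directly that
\[
\lim_{x\to x_0}\frac{|g(x)-g(x_0)-0\cdot(x-x_0)|}{|x-x_0|}=0.
\]
Indeed, applying the local Lipschitz estimate on a neighborhood $U_{x_0}$ of $x_0$ with constant $C_{x_0}$, and using $u(x_0)=0$, I get $|u(x)|=|u(x)-u(x_0)|\leq C_{x_0}|x-x_0|$, hence
\[
\frac{|g(x)-g(x_0)|}{|x-x_0|}=\frac{u(x)^2}{|x-x_0|}\;\leq\;C_{x_0}^2|x-x_0|\;\longrightarrow\;0
\]
as $x\to x_0$. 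This gives differentiability of $g$ at $x_0$ with gradient $\nabla g(x_0)=0$, as required.

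There is no genuine obstacle here; both parts are short computations. The only mild care needed is in the first part, where the Lipschitz-constant bound on $g$ depends on the local sup-norm of $u$, so one must pass to a smaller neighborhood on which $u$ is bounded — this is automatic from continuity. The second part is essentially the observation that $u$ vanishing at a Lipschitz point forces $u(x)^2$ to be $O(|x-x_0|^2)$, which is more than enough for differentiability with zero derivative.
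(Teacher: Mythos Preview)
Your proposal is correct and follows essentially the same approach as the paper: the paper handles the first assertion by citing the general fact that products of locally Lipschitz functions are locally Lipschitz (your factorization $u^2(y)-u^2(x)=(u(y)+u(x))(u(y)-u(x))$ is exactly the elementary check of this), and for the second assertion the paper computes the same limit $\lim_{x\to x_0}u^2(x)/|x-x_0|=0$ via the Lipschitz estimate $u(x)=O(|x-x_0|)$, just as you do.
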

\begin{proof}
The locally Lipschitz property of $g$ follows from the well-known fact that the product of locally Lipschitz functions is locally Lipschitz; this is easily checked as an elementary consequence of Definition \ref{def2.1}.
%: for $p\in\Omega$ and a neighborhood  $U_p$ where $|u(y)-u(x)|<C_p|y-x|$, $u(x)$ is bounded by $$|u(x)|=|u(x)-u(p)+u(p)|\le|u(x)-u(p)|+|u(p)|\le C_p|x-p|+|u(p)|.$$  Then,
%  \begin{eqnarray*}
%  |g(y)-g(x)|&=&|(u(y)-u(x))(u(y)+u(x))|\\
%  &\le&|u(y)-u(x)|(|u(y)|+|u(x)|)\\
%  &\le&C_p|y-x|\cdot(C_p|y-p|+C_p|x-p|+2|u(p)|),
%  \end{eqnarray*}
%  and in a ball centered at $p$ with radius $r_p>0$ contained in $U_p$, $|g(y)-g(x)|\le C_p^\prime|y-x|$ for $C_p^\prime=2C_p(C_pr_p+|u(p)|)$.
The second claim is also elementary; let $x_0\in\Omega$ be such that $u(x_0)=0$. The properties that $g=u^2$ is differentiable at $x_0$ and $\nabla g(x_0)=0$ follow from the definition of differentiability,
  $$\lim_{x\to x_0}\frac{g(x)-g(x_0)-0\cdot(x-x_0)}{|x-x_0|}=\lim_{x\to x_0}\frac{u^2(x)}{|x-x_0|}=0,$$
  where we have used the Lipschitz property of $u(x)=u(x)-u(x_0)=O(|x-x_0|)$ in a neighborhood of $x_0$.
\end{proof}

%Denote by $S^{n-1}$ the unit sphere in $\mathbb R^n$.

\begin{lemma}\label{lem3.3}
For $n\ge 2$, let $A$ be a set of measure zero in the unit ball in $\mathbb{R}^n$. Then for almost all points $\omega$ in the unit sphere $  S^{n-1}$, the set of intersection of $A$ with the radius segment $\{r\omega: 0\leq r\leq 1\}$ is of measure zero in the line measure.
\end{lemma}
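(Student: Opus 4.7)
The plan is to reduce this to a direct application of Fubini's theorem in polar coordinates. The identity
\[
\int_{B_1} f(x)\,dv = \int_{S^{n-1}}\int_0^1 f(r\omega)\, r^{n-1}\, dr\, d\sigma(\omega),
\]
where $d\sigma$ denotes the surface measure on $S^{n-1}$, is the standard change of variables; I would state this and then simply specialize it to $f=\chi_A$, the characteristic function of $A$.

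Since $A$ has $n$-dimensional Lebesgue measure zero, the left-hand side vanishes, and the integrand on the right is non-negative, so by Fubini
\[
\int_0^1 \chi_A(r\omega)\, r^{n-1}\, dr = 0 \quad\text{for $\sigma$-a.e. } \omega\in S^{n-1}.
\]
For each such $\omega$, because the weight $r^{n-1}$ is strictly positive on $(0,1]$, the function $\chi_A(r\omega)$ must vanish for $dr$-a.e. $r\in(0,1]$. Equivalently, the one-dimensional Lebesgue measure of $\{r\in[0,1] : r\omega\in A\}$ is zero (the single endpoint $r=0$ contributes nothing). Parametrizing the radial segment $\{r\omega:0\le r\le 1\}$ by arc length through this map $r\mapsto r\omega$ is an isometry onto its image, so the line measure of $A\cap\{r\omega:0\le r\le 1\}$ is zero for $\sigma$-a.e.\ $\omega$, which is the desired conclusion.

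There is no substantive obstacle here beyond invoking Fubini correctly; the only small point to be careful about is that the polar weight $r^{n-1}$ degenerates at $r=0$, but since a single point has line measure zero this does not affect the argument. I would present the proof in roughly a half page consisting of the polar coordinate formula, the Fubini deduction, and the short observation about the positivity of $r^{n-1}$ on $(0,1]$.
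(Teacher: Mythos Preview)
Your proposal is correct and matches the paper's own proof essentially line for line: both apply the polar-coordinate change of variables to $\chi_A$, invoke Fubini to conclude that the radial integral $\int_0^1 \chi_A(r\omega)\,r^{n-1}\,dr$ vanishes for a.e.\ $\omega$, and deduce that the slice has line measure zero. If anything, you are slightly more explicit than the paper in spelling out why the positivity of $r^{n-1}$ on $(0,1]$ lets you pass from the weighted integral to the unweighted line measure.
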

\begin{proof}
  Let $|K|$ denote the $d$-dimensional measure of a measurable set $K\subseteq\re^d$, and let $\chi_A:\re^n\to\re$ be the characteristic function of the set $A$. We have $$0=|A|=\int_{|x|<1} \chi_A(x)dv=\int_{S^{n-1}}\int_0^1\chi_A(r\omega)r^{n-1}drd\omega.$$ By Fubini's theorem, we conclude that for a.e.\ $\omega\in S^{n-1}$, $\int_0^1\chi_A(r\omega)r^{n-1}dr=0$, which is the desired result: $|A\cap\{r\omega\}|=0$.
\end{proof}

% {\color{blue} Do we need this paragraph? $n=1 $ case is proved in Lemma 3.4 with $p=1$. }We note that in the $n=1$ case, $S^{n-1}=S^0$ is the pair of points $\{\pm1\}\subseteq\re^1$, so ``almost all'' in Lemma \ref{lem3.3} means ``all'' but this case does not need to be considered separately in the polar coordinate calculations in Lemma \ref{lem3.3} or the next Theorem \ref{thm3.4}.

 Given a locally Lipschitz function $u$ on $\Omega$, denote by $Z_u$ be the zero set of $u$ in $\Omega$, that is, $Z_u=\{x\in \Omega\,\vert\, u(x)=0\}$. Theorem \ref{thm1.1} will be a consequence of the following general result concerning Lipschitz functions.
 
\begin{theorem}\label{thm3.4}
Let $\Omega$ be a domain in $\mathbb R^n, n\ge 2$, and $u$ be a locally Lipschitz function on $\Omega$.   If the zero set ${Z_u}$ of $u$ is neither $\emptyset $ nor $\Omega$, then
\begin{equation}\label{hyp}
\int_{\Omega\setminus {Z_u}}\bigg{|}\frac{\nabla u(x)}{u(x)}\bigg{|}^n\, dv=\infty.
\end{equation}
\end{theorem}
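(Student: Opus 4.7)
My approach is to argue by contradiction and apply Lemma \ref{lem3.1} along a generic radial line emanating from a boundary point of $Z_u$. Suppose to the contrary that $\int_{\Omega\setminus Z_u}|\nabla u/u|^n\,dv<\infty$. Since $u$ is continuous, $Z_u$ is a closed, proper, non-empty subset of the connected domain $\Omega$, so its relative boundary in $\Omega$ is non-empty; pick $x_0$ in this boundary. Then $u(x_0)=0$ while every neighborhood of $x_0$ meets $\Omega\setminus Z_u$. Using local Lipschitzness, choose $r>0$ so that $\overline{B(x_0,r)}\subset\Omega$ and $u$ is Lipschitz on $\overline{B(x_0,r)}$. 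Pick $x_1\in B(x_0,r/2)\setminus Z_u$ and $\rho>0$ small enough that $B(x_1,\rho)\subset B(x_0,r)\setminus Z_u$; the set
$$V=\left\{\omega\in S^{n-1}:x_0+s\omega\in B(x_1,\rho)\text{ for some }s>0\right\}$$
is open in $S^{n-1}$ and therefore has positive surface measure.

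I then pass to polar coordinates centered at $x_0$. The finiteness hypothesis and Fubini give
$$\int_0^r\left|\frac{\nabla u(x_0+s\omega)}{u(x_0+s\omega)}\right|^n\chi_{\Omega\setminus Z_u}(x_0+s\omega)\,s^{n-1}\,ds<\infty$$
for almost every $\omega\in S^{n-1}$. Combining Rademacher's theorem with Lemma \ref{lem3.3} applied to the null set where $\nabla u$ fails to exist, one obtains for almost every $\omega$ that the radial slice $\varphi(s):=u(x_0+s\omega)$ is Lipschitz on $[0,r]$ with $\varphi(0)=0$ and $\varphi'(s)=\omega\cdot\nabla u(x_0+s\omega)$ for almost every $s$. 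When in addition $\omega\in V$, the ray meets $B(x_1,\rho)\subset\Omega\setminus Z_u$, so $\varphi$ is not identically zero.

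Fix any such good $\omega\in V$ and set
$$\lambda(s)=s^{(n-1)/n}\,\frac{|\nabla u(x_0+s\omega)|}{|u(x_0+s\omega)|}$$
on $\{s:\varphi(s)\ne 0\}$, extended by $0$ elsewhere. The radial finiteness above yields $\lambda\in L^n([0,r])$, and using the chain-rule formula for $\varphi'$ together with the standard Lipschitz fact that a Lipschitz function has zero derivative almost everywhere on its zero set, the inequality $|\varphi'(s)|\le\lambda(s)\,|\varphi(s)|\,s^{(1-n)/n}$ is verified a.e.\ on $[0,r]$. Applying Lemma \ref{lem3.1} with $p=n\ge 2$ (after a trivial rescaling from $[0,r]$ to $[0,1]$) forces $\varphi\equiv 0$ on $[0,r]$, contradicting $\omega\in V$. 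The main technical hurdle is the slicing bookkeeping: one must simultaneously exclude four distinct measure-zero sets---the Rademacher non-differentiability locus, the bad directions from Lemma \ref{lem3.3} and from Fubini, and the points of $\varphi^{-1}(0)$ where $\varphi'$ must be checked to vanish---so that on their common complement $\lambda$ is well-defined, belongs to $L^n([0,r])$, and satisfies the pointwise hypothesis of Lemma \ref{lem3.1}.
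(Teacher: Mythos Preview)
Your proof is correct and follows the same overall architecture as the paper: assume finiteness, pick $x_0\in\partial Z_u$, pass to polar coordinates, combine Fubini with Lemma~\ref{lem3.3} to find a good direction $\omega$, and then feed the radial slice $\varphi(s)=u(x_0+s\omega)$ into Lemma~\ref{lem3.1} with $p=n$ and $\lambda(s)=s^{(n-1)/n}|\nabla u/u|$.

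The one substantive difference lies in how the inequality $|\varphi'(s)|\le\lambda(s)|\varphi(s)|s^{(1-n)/n}$ is verified on the set $\{\varphi=0\}$. The paper first replaces $u$ by $g=u^2$, invoking Lemma~\ref{lem3.2} to guarantee that $\nabla g$ exists and vanishes at \emph{every} point of $Z_u$; this makes $\varphi'(t)=\nabla g(t\omega)\cdot\omega=0$ pointwise on $\varphi^{-1}(0)$. You instead leave $u$ alone and appeal to the one-variable fact that a Lipschitz function has $\varphi'=0$ almost everywhere on any level set. Both devices serve exactly the same purpose, but yours is slightly more economical since it bypasses Lemma~\ref{lem3.2} entirely. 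The paper's squaring trick, on the other hand, is more robust if one wanted pointwise control rather than a.e.\ control, and it is reused later (e.g., in Corollary~\ref{cor4.1}).
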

\begin{proof} First we make the observation that in order to prove the theorem it suffices to prove it for $g=u^2$. In fact, since
$$\int_{\Omega\setminus {Z_u}}\bigg{|}\frac{\nabla g(x)}{g(x)}\bigg{|}^n dv=2^n\int_{\Omega\setminus {Z_u}}\bigg{|}\frac{\nabla u(x)}{u(x)}\bigg{|}^n\, dv, $$
  if the conclusion (\ref{hyp}) is true for $g=u^2$, then it is also true for $u$. Hence we only need to prove (\ref{hyp}) for a function that is the square of a locally Lipschitz function. By Lemma \ref{lem3.2}, the gradient of $g$ is $0$ at every point where $g$ is $0$ (the same set ${Z_u}$ where $u$ is $0$), and $g$ also satisfies the locally Lipschitz assumption. For the rest of the proof we assume (by replacing $u$ with $g$) that $\nabla u(x)=0$ wherever $u(x)=0$.
Let
\begin{equation}
V(x)=\begin{cases}
\Big{|}\frac{\nabla u(x)}{u(x)}\Big{|}\quad x\in \Omega\setminus {Z_u}\\
0\quad \quad \quad\,\,\,x\in {Z_u}.
\end{cases}
\end{equation}
Note that $V$ is a measurable function in $\Omega$.  The zero set ${Z_u}$ is closed in $\Omega$, and by the assumptions that ${Z_u}\neq\emptyset$, ${Z_u}\neq\Omega$, and $\Omega$ is connected, there is some boundary point $x_0\in\partial {Z_u}\subseteq {Z_u}\subseteq\Omega$, and a ball $B_{r_0}(x_0)$  such that the closure   $\overline{B_{r_0}(x_0)}\subseteq\Omega$,  
 and $u$ is Lipschitz on   $\overline{B_{r_0}(x_0)}$.   We can assume, after a translation and scaling, that $x_0$ is the origin and the radius $r_0$ is equal to $1$.  Because $B_1\setminus {Z_u}$ is open and non-empty, there is some $B_{r_1}(x_1)\subseteq B_1$ where $u$ is nonvanishing.

Now suppose toward a contradiction that \eqref{hyp} is false:\begin{equation}
\int_{\Omega} V^n(x) \, dv= \int_{\Omega\setminus {Z_u}}\bigg{|}\frac{\nabla u(x)}{u(x)}\bigg{|}^n\, dv< \infty,
\end{equation}
and therefore $V\in L^n(\Omega)$.  Hence in polar coordinates,
\begin{equation}\label{fin}
\int_{B_1}V^n(x)\, dv=\int_{S^{n-1}}\int_{0}^1 V^n(r\omega)r^{n-1}\, drd\omega.
\end{equation}

Since the integral \eqref{fin} is finite, Fubini's theorem implies that for a.e.\ $\omega\in S^{n-1}$ we have
\begin{equation}\label{a}
\int_0^1 V^n(r\omega)r^{n-1}\, dr<\infty.
\end{equation}
From  Rademacher's theorem, let $A\subseteq\Omega$ be the set of measure zero where $\nabla u(x)$ does not exist at $x$.
Choose $\omega_0\in S^{n-1}$  such that \eqref{a} holds, that is, $V(r\omega_0)r^{\frac{n-1}{n}}\in L^n([0,1])$ and at the same time, by Lemma \ref{lem3.3} the same $\omega_0\in S^{n-1}$ can be chosen such that $\nabla u(x)$ exists a.e.\ on the radius segment $\{r\omega_0\}$.
Define $\varphi$, for $t\in[0,1]$, by \[\varphi(t)=u(t\omega_0).\]
It is evident that $\varphi(t)$ is Lipschitz on $[0,1]$ from the Lipschitz property of $u$.
Then applying the chain rule at points $t$ such that $u$ is differentiable at $t\omega_0$, we have
$$\varphi^\prime(t)=\nabla u(t\omega_0)\cdot\omega_0,$$
which implies, for a.e.\ $t\in[0,1]$,
\begin{equation}
|\varphi^\prime(t)|\leq |\nabla u(t\omega_0)|.
\end{equation}
By the definition of $V$, we have
\begin{equation}
|\varphi^\prime(t)|\leq V(t\omega_0)|u(t\omega_0)|=V(t\omega_0)|\varphi(t)|\quad \text{for }\,u(t\omega_0)\neq 0.
\end{equation}
However, when $u(t\omega_0)= 0$, we have, by the observation at the beginning of the proof, $\nabla u(r\omega_0)=0$ and therefore $\varphi^\prime(t)=0$. Hence
we have shown that
$$|\varphi^\prime(t)|\leq V(t\omega_0)|\varphi(t)|=V(t\omega_0)t^{\frac{n-1}{n}}|\varphi(t)|t^{-\frac{n-1}{n}}$$ holds for a.e.\ $t\in [0, 1]$. By Lemma \ref{lem3.1}, with $\lambda(t)=V(t\omega_0)t^{\frac{n-1}{n}}$ and $p=n$, $\varphi(t)\equiv 0$.   So $u\equiv0$ on all the radius segments $\{t\omega_0\}$ for a.e.\ $\omega_0\in S^{n-1}$, but this contradicts the fact that $u$ has no  zeros in the ball $B_{r_1}(x_1)$.
\end{proof}

\begin{remark}\label{re}
  The proof of Theorem \ref{thm3.4} actually leads to the following stronger conclusion: under the same assumption as in Theorem \ref{thm3.4},  on every neighborhood $U\subseteq\Omega$ of  a point $a\in \Omega\cap \partial {Z_u}$, one has
   $$\int_{U\setminus {Z_u}}\bigg{|}\frac{\nabla u(x)}{u(x)}\bigg{|}^n\, dv=\infty.$$
\end{remark}
\medskip

\begin{proof}[Proof of Theorem \ref{thm1.1}]
  For the one-dimensional case $n=1$, $\Omega$ is an open interval $(a,b)$ and Lemma \ref{lem3.1} can be used directly.  For any $s\in(x_0,b)$, let $\varphi(x)=u((s-x_0)x+x_0)$ so that $\varphi(0)=u(x_0)=0$, $\varphi(1)=u(s)$, and $\varphi$ is Lipschitz on $[0,1]$.  Lemma \ref{lem3.1} applies to $\varphi$ with $p=1$ and $\lambda(x)=V((s-x_0)x+x_0)\cdot|s-x_0|\in L^1([0,1])$, to show $\varphi(1)=0=u(s)$.  Similarly, $u(t)=0$ for any $a<t<x_0$.

  For $n\ge 2$, suppose $u$ and $V$ satisfy $|\nabla u |\leq V |u |$ a.e.\ on $\Omega$ with $V\in L^n_{loc}(\Omega)$.  Let $B=B_r(x_0)$ be a nonempty open ball such that  $\overline B\subset\Omega$.   Suppose toward a contradiction that $B\not\subseteq {Z_u}$. Then $u$ is locally Lipschitz on $B$, and the zero set  $Z_u\cap B$ of $u$ in $B$ is neither $\emptyset$ (since $x_0\in Z_u\cap B$) nor $B$. However,   $$\int_{B\setminus {Z_u}}\left|\frac{\nabla u(x)}{u(x)}\right|^ndv\le\int_{B\setminus {Z_u}}\left|V(x)\right|^ndv<\infty, $$    contradicting Theorem \ref{thm3.4}.  We can conclude $B\subseteq {Z_u}$. Thus ${Z_u}$ is both open and closed in the connected set $\Omega$ and $u\equiv0$.
\end{proof}

\begin{remark}\label{rem}
    The Lipschitz condition cannot just be dropped in Theorem \ref{thm1.1} when $n\ge 2$.  Indeed, Example \ref{ex1.2} gives a nontrivial function $u_0$ that  is  locally Lipschitz on $B_{\frac{1}{2}}\setminus \{0\}$, continuous on $B_{\frac{1}{2}}$ with $u_0(0)=0$,  and solves  $|\nabla u| = V|u|$ on $ B_{\frac{1}{2}}$ for some    $V\in  L^n(B_{\frac{1}{2}})$. This indicates that the zero set of solutions fails to propagate at a non-Lipschitz point in general.
\end{remark}

On the other hand, the hypothesis of Theorem \ref{thm1.1} can be weakened as in the following Corollary without contradicting Remark \ref{rem} ---  if $u$ is a continuous $k^{th}$ root of a locally Lipschitz function, then the uniqueness still holds.

\begin{corollary}\label{thm4.7}
  Let $\Omega$ be a domain in $\mathbb R^n$ and  $u:\Omega\to\re$ be continuous on $\Omega$ with the zero set $Z_u\subseteq\Omega$.  If there is some integer $k\ge1$ so that $v(x)=(u(x))^k$ is locally Lipschitz on $\Omega$, then $\nabla u$ exists a.e.\ in $\Omega\setminus Z_u$.  Further, if there is some $V \in L^n_{loc}(\Omega)$, so that $\nabla u$ satisfies 
  \begin{equation*}%\label{eq1.1}
   |\nabla u|\leq V |u|\mbox{ \ \  \ a.e.\ on $\Omega\setminus Z_u$,}
  \end{equation*}
  and $Z_u\ne \emptyset$, then $u\equiv 0$.
\end{corollary}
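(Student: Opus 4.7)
The natural strategy is to transfer everything to the locally Lipschitz function $v=u^k$ and invoke Theorem \ref{thm1.1} for $v$. For the first claim, the open set $\Omega\setminus Z_u$ is where the continuous function $u$ is nowhere zero, hence $u$ has constant sign on each connected component. Locally one can therefore write $u=\pm v^{1/k}$, with $v$ bounded away from $0$; since $t\mapsto t^{1/k}$ is smooth away from $0$ and $v$ is a.e.\ differentiable by Rademacher's theorem, the chain rule produces $\nabla u$ a.e.\ on $\Omega\setminus Z_u$, together with the identity $\nabla v=k\,u^{k-1}\nabla u$ there.

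For the second claim, I plan to verify that $v$ satisfies a differential inequality of the form needed for Theorem \ref{thm1.1} with potential $kV\in L^n_{loc}(\Omega)$. On $\Omega\setminus Z_u$, the chain rule identity together with the hypothesis on $u$ gives
$$|\nabla v|=k|u|^{k-1}|\nabla u|\le k|u|^{k-1}V|u|=(kV)|v|\quad\text{a.e.}$$
On the zero set $Z_v=Z_u$, I would invoke the standard fact that a locally Lipschitz function has vanishing gradient almost everywhere on its zero set, so $|\nabla v|=0=(kV)|v|$ a.e.\ on $Z_u$. Patching the two pieces, $|\nabla v|\le(kV)|v|$ a.e.\ on $\Omega$. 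Since $Z_u$ is nonempty and $Z_u\subseteq Z_v$, any $x_0\in Z_u$ satisfies $v(x_0)=0$, and Theorem \ref{thm1.1} forces $v\equiv 0$ on $\Omega$, whence $u\equiv 0$ by continuity.

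The principal technical point to justify carefully is the claim that the gradient of a locally Lipschitz function vanishes almost everywhere on its zero set. This is classical: at almost every point of $\{v=0\}$ the set has Lebesgue density one, while a nonzero $\nabla v(x_0)$ with $v(x_0)=0$ would force $v$ to be linearly non-vanishing on a half-cone at $x_0$, contradicting the density. In the writeup I would either record this as a short lemma or cite a standard reference. Once that is in place, everything else reduces to Theorem \ref{thm1.1} applied to $v$.
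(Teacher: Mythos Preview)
Your argument is correct and follows the same overall strategy as the paper: reduce to Theorem \ref{thm1.1} applied to a locally Lipschitz function built from $u$. The one substantive difference lies in how the zero set is handled. You work directly with $v=u^k$ and appeal to the classical fact that a locally Lipschitz function has vanishing gradient a.e.\ on its zero set, justified via the Lebesgue density theorem. The paper instead passes to $g=v^2=u^{2k}$ and invokes Lemma \ref{lem3.2}, which yields the stronger and entirely elementary conclusion that $\nabla g=0$ at \emph{every} point of $Z_u$ (not merely a.e.), since the square of a Lipschitz function vanishing at $x_0$ is automatically $O(|x-x_0|^2)$. Your route is marginally more direct in that it avoids introducing an auxiliary function; the paper's squaring trick buys a fully self-contained argument that sidesteps the density theorem.
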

\begin{proof}
  On the open set where $u(x)>0$, the partial derivatives of $v$ exist a.e.\ and at each point where $\nabla v$ exists, by the chain rule, the partial derivatives of $u(x)=(v(x))^{1/k}$ exist, with $\nabla u(x)=\frac1k(v(x))^{\frac1k-1}\nabla v(x)$.  Similarly, on the open set where $u(x)<0$, the partial derivatives of $u=-((-1)^kv)^{1/k}$ exist a.e., establishing the first claim.
  
  Consider $g(x)=(v(x))^2=(u(x))^{2k}$.  By Lemma \ref{lem3.2}, $g$ is locally Lipschitz on $\Omega$ and at every point where $v(x)=0$, which is the same set as $Z_u$, the partial derivatives of $g$ exist with $\nabla g=0$.  At every point where $u(x)\ne0$, if $\nabla u$ exists, then $\nabla g$ also exists and is equal to $2ku^{2k-1}\nabla u$.  So, at every point in $\Omega$ except for a set of measure zero contained in $\Omega\setminus Z_u$, $\nabla g$ exists and satisfies:
    \begin{equation*}
        |\nabla g|=2k|u|^{2k-1}|\nabla u|\le 2kV |u|^{2k} = 2kV|g|.
    \end{equation*}
  Applying Theorem \ref{thm1.1} and the assumption   $Z_u\ne \emptyset$, we have $g\equiv 0$, and thus $u\equiv 0$.
\end{proof}

We conclude the section with an application of Theorem \ref{thm1.1}   to a uniqueness problem for a nonlinear system of differential equations. 

\begin{corollary}\label{coo}
    For a domain $\Omega\subseteq\mathbb R^n$, $u:\Omega\to\re$, $x_0\in \Omega$, $y_0\in\mathbb R$, let $f: \mathbb R\rightarrow \mathbb R^n $ be Lipschitz on $\mathbb R$.  Then there exists at most one Lipschitz  solution to   $\nabla u =f(u)$ on $\Omega$ with $u(x_0)=y_0$. 
\end{corollary}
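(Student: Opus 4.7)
The plan is to reduce the uniqueness statement to Theorem \ref{thm1.1} by looking at the difference of two solutions. Suppose $u_1$ and $u_2$ are two Lipschitz solutions on $\Omega$ with $u_1(x_0) = u_2(x_0) = y_0$, and set $w = u_1 - u_2$. Then $w$ is Lipschitz on $\Omega$, $w(x_0) = 0$, and at every point where $\nabla u_1$ and $\nabla u_2$ both exist (i.e.\ almost everywhere, by Rademacher's theorem), we have
\begin{equation*}
\nabla w = \nabla u_1 - \nabla u_2 = f(u_1) - f(u_2).
\end{equation*}

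Next I would exploit the Lipschitz hypothesis on $f$. Let $L$ denote a Lipschitz constant for $f:\mathbb{R} \to \mathbb{R}^n$; then
\begin{equation*}
|\nabla w(x)| = |f(u_1(x)) - f(u_2(x))| \le L\,|u_1(x) - u_2(x)| = L\,|w(x)|
\end{equation*}
for a.e.\ $x \in \Omega$. Taking the potential $V(x) \equiv L$, which is a constant and therefore trivially in $L^n_{\mathrm{loc}}(\Omega)$, the function $w$ satisfies the hypotheses of Theorem \ref{thm1.1}: it is a locally Lipschitz real-valued function on the domain $\Omega$, obeys $|\nabla w| \le V|w|$ a.e., and vanishes at the point $x_0 \in \Omega$. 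Theorem \ref{thm1.1} then forces $w \equiv 0$, i.e.\ $u_1 \equiv u_2$.

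There is essentially no obstacle here; the entire content of the corollary is in Theorem \ref{thm1.1}. The only minor point worth checking is that although Theorem \ref{thm1.1} is stated for scalar-valued $u:\Omega \to \mathbb{R}$, the difference $w = u_1 - u_2$ is indeed scalar, so the theorem applies directly without any need for the vector-valued extension. The Lipschitz assumption on $f$ is used exactly once, to convert the nonlinear equation $\nabla u = f(u)$ into the linear differential inequality $|\nabla w| \le L|w|$ needed as input to Theorem \ref{thm1.1}.
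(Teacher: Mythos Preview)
Your proof is correct and follows exactly the same approach as the paper: form the difference $w=u_1-u_2$, use the Lipschitz bound on $f$ to get $|\nabla w|\le C|w|$ with a constant potential, and invoke Theorem~\ref{thm1.1}. There is nothing to add.
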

\begin{proof}
    Suppose there exists a pair of Lipschitz solutions  $u_1, u_2$ to $\nabla u =f(u)$ on $\Omega$ with the same initial condition $u_1(x_0)=u_2(x_0)=y_0$. Then $w = u_1-u_2$ is Lipschitz on $\Omega$, $w(x_0)=0$, and $w$ satisfies
    $$|\nabla w| = |\nabla u_1 -\nabla u_2| =  |f(u_1)- f(u_2)|\le C|u_1-u_2| = C|w|\ \ \text{on}\ \ \Omega,  $$ 
    where  $C>0$ is the Lipschitz constant for $f$. By Theorem \ref{thm1.1}, we have $w \equiv 0$.
\end{proof}

\section{Further applications}\label{sec5}

\begin{corollary}\label{cor4.1}
  Given any nonempty closed set $A\subsetneq\mathbb{R}^n$, there exists a smooth function $F:\mathbb{R}^n\setminus A\to\re$ so that for any point $a\in\partial A$, $F$ cannot be extended to an $L^n$ integrable function  over any neighborhood of $a$.
\end{corollary}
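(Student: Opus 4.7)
The plan is to reduce to the stronger localized form of Theorem \ref{thm3.4} recorded in Remark \ref{re}. The idea is to exhibit a smooth nonnegative function on $\mathbb{R}^n$ whose zero set is exactly $A$, and then take a natural ratio involving its gradient as the witness $F$.

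First, I would invoke the classical result (going back to Whitney) that every closed set $A\subseteq\mathbb{R}^n$ is the zero set of some nonnegative $u\in C^\infty(\mathbb{R}^n)$ with $u>0$ on $\mathbb{R}^n\setminus A$. A standard construction writes the open complement $\mathbb{R}^n\setminus A=\bigcup_{j=1}^\infty B_j$ as a countable union of open balls, chooses bump functions $\chi_j\in C_c^\infty(B_j)$ with $\chi_j>0$ on $B_j$, and sets $u=\sum_j c_j\chi_j$ with coefficients $c_j>0$ decaying fast enough that the series converges in every $C^k$-norm on compact sets. By construction $u$ vanishes together with all its derivatives on $A$ and is strictly positive off $A$.

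Then I would define
\begin{equation*}
F(x)=\frac{|\nabla u(x)|}{u(x)}, \qquad x\in\mathbb{R}^n\setminus A,
\end{equation*}
which is smooth on $\mathbb{R}^n\setminus A$ because $u$ is smooth and strictly positive there. Since $u$ is smooth (hence locally Lipschitz) on $\Omega=\mathbb{R}^n$, and the zero set $Z_u=A$ is neither empty (by hypothesis $A\neq\emptyset$) nor all of $\mathbb{R}^n$ (by $A\subsetneq\mathbb{R}^n$), Remark \ref{re} applies at every boundary point: for each $a\in\partial A=\partial Z_u$ and each neighborhood $U$ of $a$,
\begin{equation*}
\int_{U\setminus A}|F(x)|^n\,dv = \int_{U\setminus Z_u}\left|\frac{\nabla u(x)}{u(x)}\right|^n dv = \infty.
\end{equation*}
If $\tilde F\in L^n(U)$ were any extension of $F$, then $\int_U|\tilde F|^n\,dv\geq\int_{U\setminus A}|F|^n\,dv=\infty$, a contradiction. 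Hence no $L^n$ extension of $F$ exists across any point of $\partial A$.

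The only step beyond a mechanical invocation of Remark \ref{re} is the construction of the smooth function $u$ with prescribed zero set $A$; this is the (modest) main obstacle, and it is classical. Everything else is an immediate application of the integral divergence established in the previous section.
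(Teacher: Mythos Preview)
Your proof is correct and follows essentially the same approach as the paper: construct a smooth function with zero set exactly $A$ via Whitney's theorem, set $F=|\nabla u|/|u|$, and invoke the integral divergence from the previous section. The only cosmetic difference is that the paper squares Whitney's function (so that $\nabla u=0$ on $A$) and then applies Theorem~\ref{thm1.1}, whereas you take a nonnegative Whitney function and cite Remark~\ref{re} directly; both routes yield the same conclusion.
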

\begin{proof}
 First, by a well-known theorem of Whitney (\cite{W34}), there exists a smooth function $h:\mathbb{R}^n\to\re$ whose zero set is exactly $A$.   Its square, $u(x)=(h(x))^2$, is also smooth, has zero set exactly $A$, and satisfies $\nabla u=0$ at every point of $A$ by Lemma \ref{lem3.2}.  The quotient $\frac{|\nabla u|}{|u|}$ is the claimed smooth function $F$ on the open set $\re^n\setminus A$.  If there were some ball $B=B_r(a)$ and a function $V\in L^n(B)$ which agrees with $\frac{|\nabla u|}{|u|}$ on $B\setminus A$, then $u$ and $V$ would satisfy (\ref{eq}) from Theorem \ref{thm1.1} at every point of $B\setminus A$ by construction of $V$, and at every point of $B\cap A$, where $|\nabla u|=0$.  By Theorem \ref{thm1.1}, $u\equiv0$ on $B$, contradicting the assumption that $a$ is a boundary point of the zero set.
\end{proof}

\begin{theorem}\label{t1}
Let $\Omega$ be an open set in $\mathbb{R}^n$, and   $u$ be a   Lipschitz function on $\Omega$. Then   $\log |u(x)-u(a)|\notin W_{loc}^{1,n}(\Omega\setminus Z_{u-u(a)})$ for every $a\in\Omega$. In particular, if  $\log |u(x)|\in W_{loc}^{1,n}(\Omega\setminus Z_{u})$, then $u$ is nowhere zero on  $\Omega$.
If, in addition, $\Omega$ has Lipschitz boundary, then the above results also hold true with $\Omega$ replaced by $\overline{\Omega}$.
\end{theorem}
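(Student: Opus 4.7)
The plan is to deduce the first assertion from Remark \ref{re}, applied to the Lipschitz function $w := u-u(a)$. Since $w(a)=0$, the zero set $Z_w$ contains $a$ and is therefore nonempty. If $Z_w=\Omega$ then $\Omega\setminus Z_w$ is empty and the statement is vacuous, so assume $\emptyset\neq Z_w\subsetneq\Omega$; in particular, the set $\partial Z_w\cap\Omega$ contains at least one point $b$. Suppose, toward a contradiction, that $\log|w|\in W_{loc}^{1,n}(\Omega\setminus Z_w)$. Then there is a neighborhood $U\subset\Omega$ of $b$ such that the weak gradient of $\log|w|$ belongs to $L^n(U\setminus Z_w)$. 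Since $w\in W^{1,\infty}_{loc}(\Omega)$, the standard chain rule for weak derivatives gives $\nabla\log|w|=\nabla w/w$ almost everywhere on $U\setminus Z_w$. Hence $|\nabla w|/|w|\in L^n(U\setminus Z_w)$, in direct contradiction with Remark \ref{re}, which asserts that $\int_{U\setminus Z_w}|\nabla w/w|^n\,dv=\infty$ for every neighborhood $U$ of a point in $\partial Z_w$.

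The ``in particular'' statement is the contrapositive: if $u$ vanished at some point $a\in\Omega$, then $u-u(a)=u$ and $Z_{u-u(a)}=Z_u$, so the first claim would rule out $\log|u|\in W_{loc}^{1,n}(\Omega\setminus Z_u)$. For the closure version under a Lipschitz boundary assumption, I would first extend $u$ to a Lipschitz function $\tilde u$ on an open set $\widetilde\Omega$ properly containing $\overline{\Omega}$ (e.g.\ via the McShane--Whitney extension to all of $\mathbb R^n$), and then invoke the results just proved on $\widetilde\Omega$. The Lipschitz regularity of $\partial\Omega$ ensures that every boundary point of $\Omega$ is an interior point of $\widetilde\Omega$, so the local $W^{1,n}$ property demanded on $\overline\Omega$ translates to the corresponding local property on neighborhoods inside $\widetilde\Omega$, and the argument above applies verbatim.

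The main subtlety I anticipate is the correct reading of $W_{loc}^{1,n}(\Omega\setminus Z_{u-u(a)})$: interpreting ``loc'' relative to the open set $\Omega\setminus Z_w$ itself would render the statement vacuous, since $|w|$ is bounded below on any compact subset of that open set and the chain rule then places $\log|w|$ in $W^{1,n}$ automatically. The meaningful reading is that ``loc'' refers to compact subsets of the ambient domain $\Omega$, allowing such compacta to touch $Z_w$; with this reading, Remark \ref{re} delivers precisely the blow-up of the $L^n$-norm of $\nabla w/w$ near $\partial Z_w$ required for the contradiction.
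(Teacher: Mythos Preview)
Your treatment of the interior case is essentially the paper's: set $v=u-u(a)$, compute $|\nabla\log|v||=|\nabla v|/|v|$ on $\{v\ne 0\}$, and invoke Remark~\ref{re} at a point of $\partial Z_v\cap\Omega$ to force the $L^n$ blow-up. Your observation about the intended reading of ``loc'' (relative to $\Omega$, not to $\Omega\setminus Z_w$) is correct and worth stating, and your choice of an arbitrary $b\in\partial Z_w\cap\Omega$ is arguably cleaner than the paper's case split on whether $a$ itself lies in $\partial Z_v$.

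The boundary version, however, has a genuine gap. Your plan is to extend $u$ to a Lipschitz $\tilde u$ on some $\widetilde\Omega\supset\overline\Omega$ and then apply the interior result there. But the interior conclusion (via Remark~\ref{re}) is that $\int_{U\setminus Z_{\tilde w}}|\nabla\tilde w/\tilde w|^n=\infty$ for neighborhoods $U\subset\widetilde\Omega$ of some point of $\partial Z_{\tilde w}$; this integral splits as a piece over $U\cap\overline\Omega$ and a piece over $U\setminus\overline\Omega$, and a McShane--Whitney extension gives you no control whatsoever on $|\nabla\tilde w|/|\tilde w|$ outside $\overline\Omega$. The divergence could sit entirely in $U\setminus\overline\Omega$, so nothing forces $\int_{(U\cap\overline\Omega)\setminus Z_w}|\nabla w/w|^n=\infty$, which is what you actually need. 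Relatedly, your sentence ``Lipschitz regularity of $\partial\Omega$ ensures that every boundary point of $\Omega$ is an interior point of $\widetilde\Omega$'' does not use the hypothesis at all---any extension to $\mathbb R^n$ already makes boundary points interior. The paper uses the Lipschitz boundary in a different and essential way: it guarantees an interior cone $S_a\subset\Omega$ with vertex at $a\in\partial\Omega$, and then reruns the polar-coordinate argument of Theorem~\ref{thm3.4} on that cone, so the divergent integral is computed entirely inside $\Omega$. If you want to salvage the extension idea, you would need a bi-Lipschitz reflection across $\partial\Omega$ (which the Lipschitz boundary does provide locally) rather than McShane--Whitney, so that the outside integral is comparable to the inside one; as written, the argument does not close.
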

\begin{proof}
%If $u$ is constant  then the theorem is trivially true. Otherwise,
Let $v = u-u(a)$ on $\Omega$. Then $v$ is     Lipschitz   on $\Omega$ with $v(a) =0$. 
% We recall that a function belongs to $W^{1,n}(\Omega)$ if $u\in L^n(\Omega)$ and its weak derivative $\nabla u$ belongs to $L^n(\Omega)$.
If $\log |v|\notin L_{loc}^n(\Omega)$, then we are done. If $\log |v|\in L_{loc}^n(\Omega)$, then one further computes  
 $$ \left|\nabla \log |v|\right| = \frac{\left|\nabla v \right|}{|v|}$$
 wherever $v\ne 0$. If $a$ is an interior point of $Z_v= \{x\in \Omega\,\vert\, v(x)=0\}$, the zero set of $v$, then the theorem is trivially true. If $a\in \partial Z_v\cap \Omega$, then one can apply    Remark \ref{re} to conclude $\nabla \log |v|\notin L_{loc}^n(\Omega\setminus Z_v)$.

In the case when $a\in \partial \Omega$ and $\Omega$ has Lipschitz boundary, if $ v(b) =0$ for  some $b\in \Omega$, then it is reduced to the $a\in \Omega$ case. Thus we assume $a\in \partial \Omega$ and $  v(x)\ne 0$ for all $x\in \Omega$. In particular, this means there exists  a cone $ S_{a}\subseteq\Omega$ centered at $a$ (which exists since $\Omega$ has Lipschitz boundary)  such that   $v \ne 0$ on $S_a$. Making use of a similar argument as in the proof of Theorem \ref{thm3.4}, with $B_1$ replaced by $S_a$, one can  obtain $\nabla \log |v|\notin L_{loc}^n(\Omega)$.
%For the second part, first we extend $u$ to be a Lipschitz function defined in $\mathbb{R}^n$. {\color{blue} How to guarantee logarithm of the extended function to be in $W^{1,n} $ on the bigger domain? On the other hand, from the proof of Theorem \ref{thm1.1}, it seems we might need some regularity of $\partial\Omega$ near zero points of $u$, for instance, Lipschitz boundary, so one can apply the polar coordinates over a sector} If $u(x)$ vanishes at a point in $\overline\Omega$, we apply Theorem \ref{thm3.4} to the extended function to get a contradiction.
\end{proof}

A natural way to view Theorem \ref{t1} is as follows. Denote by $Lip(\Omega)$ the set of all Lipschitz functions on $\Omega$.   Theorem \ref{t1} implies that for each $a\in {\Omega}$, $$T_a(Lip(\Omega))\cap  W^{1,n}(\Omega)=\emptyset,$$
where $T_a$ is a  (non-linear) map  on $ Lip(\Omega)$  defined by   $T_a(u)=\log |u-u(a)|$,  $u\in Lip(\Omega)$.  The following Corollary is a direct consequence  of Theorem \ref{t1} as well.

\begin{corollary}Let $\Omega$ be an open set in $\mathbb{R}^n$  and   $u: \Omega\rightarrow \mathbb R$ be    locally Lipschitz   on $\Omega$. If the zero set $Z_u$ of $u$ is neither empty nor $\Omega$,  and
\begin{equation}\label{po}
\int_{\Omega\setminus Z_u}|\nabla \log|u(x)||^p\, dv<\infty.
\end{equation}
then  $p<n$.  \boxx
\end{corollary}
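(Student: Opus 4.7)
The plan is to deduce the corollary from Remark \ref{re} by a H\"older inequality localization. Since $u$ is locally Lipschitz and nonvanishing on the open set $\Omega\setminus Z_u$, the chain rule gives $|\nabla\log|u(x)||=|\nabla u(x)|/|u(x)|$ a.e.\ there, so the hypothesis \eqref{po} is equivalent to
$$\int_{\Omega\setminus Z_u}\left|\frac{\nabla u(x)}{u(x)}\right|^p dv<\infty.$$

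Because $Z_u$ is a nonempty proper closed subset of $\Omega$, I can choose a point $a\in\partial Z_u\cap\Omega$ together with a bounded open neighborhood $U$ of $a$ satisfying $\overline{U}\subset\Omega$. Suppose toward a contradiction that $p\ge n$. Then H\"older's inequality on the set $U\setminus Z_u$, applied with conjugate exponents $p/n$ and $p/(p-n)$ (the case $p=n$ being trivial), yields
$$\int_{U\setminus Z_u}\left|\frac{\nabla u}{u}\right|^n dv\le |U|^{1-n/p}\left(\int_{U\setminus Z_u}\left|\frac{\nabla u}{u}\right|^p dv\right)^{n/p}<\infty,$$
where the finiteness on the right is inherited from the global $L^p$ bound on $\Omega\setminus Z_u$ via $U\subset\Omega$. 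This directly contradicts Remark \ref{re}, which asserts that the $L^n$ integral of $|\nabla u|/|u|$ on every neighborhood of $a$ is infinite. We conclude $p<n$.

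There is no real obstacle here: the substantive analytic content is already packaged in Theorem \ref{thm3.4} and Remark \ref{re}, and the corollary amounts to a single H\"older absorption step on a bounded set to promote the assumed $L^p$ integrability to an $L^n$ integrability. The only tacit point worth flagging is the need for a boundary point of $Z_u$ to lie inside $\Omega$, which is automatic as soon as $\Omega$ is connected, in keeping with the convention used in the preceding theorems.
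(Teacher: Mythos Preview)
Your proof is correct and essentially coincides with the paper's intended argument: the corollary is stated there without proof as ``a direct consequence of Theorem \ref{t1},'' and Theorem \ref{t1} itself is proved precisely via Remark \ref{re}, so you have simply short-circuited one step. Your closing remark about connectedness is well taken---with $\Omega$ merely an open set, $Z_u$ could be a union of components and $\partial Z_u\cap\Omega$ empty, so the statement should indeed be read with $\Omega$ a domain.
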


%\begin{corollary}Let $\Omega$ be an open set in $\mathbb{R}^n$, and let $u\in W^{1,n}(\Omega)$. Then the exponential $e^u$ of $u$ cannot be Lipschitz unless $u$ is Lipschitz and $u\not=0$ in $\overline\Omega$. {\color{blue}This does not seem to be correct. Say, $u =x$.}
%\end{corollary}

\begin{proposition}\label{snf}
Let $\Omega$ be an open set in $\mathbb{R}^n, n\ge 2$, and   $\phi:\Omega\to\re$  with $\phi\in W_{loc}^{1,n}(\Omega)$. The following statements hold for the exponential $e^{-|\phi|}$.
\begin{enumerate}
   \item $e^{-|\phi|}$  vanishes to finite order in the $L^2$ sense anywhere in $\Omega$. 
   \item If     $e^{-|\phi|}$  is locally Lipschitz on $\Omega$, then $e^{-|\phi|}$ is nowhere zero on $ \Omega$. $e^{-|\phi|}$ is nowhere zero on $ \overline\Omega$ if in addition $\Omega$ has Lipschitz boundary.  
\end{enumerate}
\end{proposition}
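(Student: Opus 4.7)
The plan is to reduce both parts of Proposition \ref{snf} to results already proved in the paper: (1) is a direct corollary of Theorem \ref{nf}, and (2) follows from Theorem \ref{thm1.1} together with the boundary version of Theorem \ref{t1}.

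For (1), I observe that $\phi\in W^{1,n}_{loc}(\Omega)$ implies $|\phi|\in W^{1,n}_{loc}(\Omega)$ via the standard Sobolev chain rule, with $|\nabla|\phi||=|\nabla\phi|$ a.e.; in particular $-|\phi|\in W^{1,n}_{loc}(\Omega)$. Applying Theorem \ref{nf} with $\phi$ replaced by $-|\phi|$ gives precisely the finite-order vanishing of $e^{-|\phi|}$ in the $L^2$ sense at every point of $\Omega$.

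For the interior part of (2), write $u=e^{-|\phi|}$ and assume $u$ is locally Lipschitz. On the open set $\Omega\setminus Z_u=\{u>0\}$, the chain rule applied to $\log u=-|\phi|$ yields
\[
\frac{|\nabla u|}{u}=|\nabla|\phi||=|\nabla\phi|\quad\text{a.e.},
\]
and wherever $u$ is differentiable inside $Z_u$, the non-negativity of $u$ forces $\nabla u=0$ (an elementary argument in the spirit of Lemma \ref{lem3.2}). Hence $u$ satisfies $|\nabla u|\le V|u|$ a.e.\ on $\Omega$ with $V=|\nabla\phi|\in L^n_{loc}(\Omega)$. If $u(x_0)=0$ at some $x_0\in\Omega$, Theorem \ref{thm1.1} would force $u\equiv 0$; but $\phi$ is a.e.\ finite on $\Omega$, so $u=e^{-|\phi|}>0$ a.e., a contradiction. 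Therefore $u$ is nowhere zero on $\Omega$.

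For the boundary part of (2), assume $\Omega$ has Lipschitz boundary and, toward a contradiction, that $u(a)=0$ for some $a\in\partial\Omega$. By the interior part, $u>0$ throughout $\Omega$, so $Z_u\cap\Omega=\emptyset$. I would then invoke the boundary version of Theorem \ref{t1} applied to the Lipschitz function $u$ at the point $a$ (with $v=u-u(a)=u$), which gives $\log u=-|\phi|\notin W^{1,n}_{loc}(\Omega)$ in a neighborhood of $a$, contradicting $\phi\in W^{1,n}_{loc}(\Omega)$. The main technical point throughout is the chain-rule bookkeeping for $u=e^{-|\phi|}$ when the Lipschitz hypothesis is on $u$ rather than $\phi$: one must justify $\nabla u=0$ a.e.\ on $Z_u$ and $|\nabla u|=u|\nabla\phi|$ a.e.\ on $\{u>0\}$, which together produce exactly the gradient inequality needed to feed into Theorem \ref{thm1.1} and Theorem \ref{t1}. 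Once this is in place, both conclusions follow immediately.
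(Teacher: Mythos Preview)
Your argument for part (1) is circular in the logical structure of the paper. You invoke Theorem \ref{nf} to conclude that $e^{-|\phi|}$ vanishes to finite order, but the paper's proof of Theorem \ref{nf} (which appears \emph{after} Proposition \ref{snf} in Section \ref{sec5}) explicitly relies on Proposition \ref{snf}(1): it first uses Moser--Trudinger to show $e^{\phi}\in L^2_{loc}$, and then compares $e^{\phi}\ge e^{-|\phi|}$ and cites Proposition \ref{snf}(1) for the finite-order vanishing of $e^{-|\phi|}$. So Theorem \ref{nf} is not ``already proved'' at this point; it is downstream of the proposition you are trying to establish. The paper instead proves (1) directly from Theorem \ref{main}: since $0<u=e^{-|\phi|}\le 1$ and $|\nabla u|=|\nabla|\phi||\,u\le |\nabla|\phi||\in L^n_{loc}$, one has $u\in W^{1,n}_{loc}\subset H^1_{loc}$ with $|\nabla u|=V|u|$, $V=|\nabla|\phi||\in L^n_{loc}$, and Theorem \ref{main} rules out infinite-order vanishing. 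Note that this is also the \emph{easier} direction: the boundedness of $e^{-|\phi|}$ makes the $H^1_{loc}$ membership immediate, whereas for $e^{\phi}$ one must appeal to Moser--Trudinger.

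Your treatment of part (2) is correct and essentially matches the paper. For the interior case the paper simply cites Theorem \ref{t1}: if $u(x_0)=0$ then $|\phi|=-\log|u-u(x_0)|\notin W^{1,n}_{loc}$, contradicting $\phi\in W^{1,n}_{loc}$. Your route via Theorem \ref{thm1.1} (setting up $|\nabla u|\le |\nabla\phi|\,|u|$ and deriving $u\equiv 0$, which contradicts $u>0$ a.e.) is the same mechanism one level lower, since Theorem \ref{t1} is itself proved from Theorem \ref{thm3.4}/Remark \ref{re}. For the boundary case you correctly appeal to the $\overline{\Omega}$ clause of Theorem \ref{t1}, which is exactly what is needed.
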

\begin{proof}
Since $\phi\in W_{loc}^{1,n}(\Omega)$, we have $|\phi|\in W_{loc}^{1, n}(\Omega)$ as well. %Since $u: = e^{-|\phi|}$ is a composition of a  Lipschitz function with  $\phi\in W^{1, n}(B)$,
 The function $u=e^{-|\phi|}$ satisfies $|u|<1$ and
$$|\nabla u| = |\nabla |\phi|| e^{-|\phi|}= \left|\nabla |\phi|\right| |u|\le  \left|\nabla |\phi|\right| \in L_{loc}^n(\Omega).$$
See for instance \cite[pp. 308]{Ev}. Hence   $u \in W_{loc}^{1, n}(\Omega)$ and satisfies $|\nabla u| =   V|u|$ with $V= |\nabla |\phi||\in L_{loc}^n(\Omega)$. By Theorem \ref{main}, $u$ cannot vanish to infinite order in the $L^2$ sense anywhere in $\Omega$. 
   
 If  $e^{-|\phi|}$  is also Lipschitz on $\Omega$, and   $e^{-|\phi|}$ is zero at $x_0\in  \Omega$, then $|\phi| = -\log |u-u(x_0)|\notin W_{loc}^{1,n}(\Omega)$ by Theorem \ref{t1}. Contradiction!
\end{proof}

Before proving Theorem \ref{nf}, let us recall the \noindent\textbf{Moser-Trudinger inequality: }Let $\Omega$ be a bounded domain in $\mathbb R^n$ with Lipschitz boundary, and $\alpha_n = n w_{n-1}^\frac{1}{n-1}$ where $w_{n-1}$ is the surface area of the unit sphere in $\mathbb R^n$. There exists a positive constant  $C_{MT}$ depending only on $n$ such that $ $
$$ \sup_{u\in W_0^{1, n}(\Omega),\ \  \|\nabla u\|_{L^n(\Omega)}\le 1 }\int_\Omega e^{\alpha_n |u(x)|^{\frac{n}{n-1}}} dv\le C_{MT} |\Omega|.$$
Here $ |\Omega|$ is the volume of $\Omega$. We shall use the Moser-Trudinger inequality to prove that the exponential of $W^{1, n}$ functions is $L^2$ integrable. 
\medskip

\begin{proof}[Proof of Theorem \ref{nf}: ] First  we show that $e^\phi\in L^2_{loc}(\Omega)$.  Pick $r $ small enough such that $ B_{2r}(x_0)\subseteq\Omega$. By Sobolev extension theorem, there exists an extension $ \tilde \phi \in W_0^{1, n}(B_{2r}(x_0)) $ of $\phi|_{B_r(x_0)}$   such that 
$$ a= \|\nabla \tilde \phi\|_{  L^n(B_{2r}(x_0))  )}\le C \| \phi\|_{  W^{1, n}(B_{r}(x_0)))}  $$
for some constant $C$ dependent only on $r $ and $n$.  In particular,   $\tilde \phi_1:  = a^{-1}\tilde \phi \in W_0^{1, n}(B_{2r}(x_0))$ and $   \|\nabla  \tilde \phi_1\|_{L^n(B_{2r}(x_0))}\le 1.$ Thus 
  one applies the Moser-Trudinger inequality to obtain   $$  \int_{B_{2r}(x_0)}  e^{\alpha_n |\tilde \phi_1(x)|^{\frac{n}{n-1}}}  dv\lesssim 1.$$
Noting that $2\tilde\phi < \alpha_n |\tilde \phi_1|^{\frac{n}{n-1}} $ when $ |\tilde \phi|> 2^{n-1}a^n \alpha_n^{1-n} $, we further have
$$ \int_{B_{2r}(x_0)\cap\{|\tilde \phi|> 2^{n-1}a^n \alpha_n^{1-n}\}} e^{2\tilde\phi(x)}\ dv\le  \int_{B_{2r}(x_0)}  e^{\alpha_n |\tilde \phi_1(x)|^{\frac{n}{n-1}}} \ dv\lesssim 1.  $$
The claim that  $e^\phi\in L^2_{loc}(\Omega)$ is thus a consequence of the following inequality 
\begin{eqnarray*}
         \int_{B_r(x_0)} e^{2\phi(x)}\ dv &\le& \int_{B_{2r}(x_0)\cap\{|\tilde \phi|\le 2^{n-1}a^n \alpha_n^{1-n}\}} e^{2\tilde \phi(x)}\ dv+ \int_{B_{2r}(x_0)\cap\{|\tilde \phi|> 2^{n-1}a^n \alpha_n^{1-n}\}} e^{2\tilde\phi(x)}\ dv\\
         &\lesssim& e^{2^{n}a^n \alpha_n^{1-n} }r^n + 1. 
\end{eqnarray*} 
 On the other hand, by Proposition \ref{snf} part {\it (1)}, $e^{-|\phi|} $ vanishes to finite order in the $L^2$ sense at $x_0$. Equivalently, there exists some $m_0\ge 0$ such that 
 $$ \overline{\lim_{r\rightarrow 0}}\  r^{-m_0}\int_{|x-x_0|<r}|e^{-|\phi(x)|}|^2 dv  >0.$$
 Since  $e^\phi\ge e^{-|\phi|}\ge 0$ and $ e^\phi\in L^2_{loc}(\Omega)$, we further have
 $$ \overline{\lim_{r\rightarrow 0}}\ r^{-m_0}\int_{|x-x_0|<r}|e^{\phi(x)}|^2 dv\ge  \overline{\lim_{r\rightarrow 0}}\ r^{-m_0}\int_{|x-x_0|<r}|e^{-|\phi(x)|}|^2 dv  >0. $$
 Namely, $ e^\phi$ vanishes to finite order in the $L^2$ sense at $x_0$.
\end{proof}

 \begin{corollary}\label{nf1}
 Let $\Omega$ be an open set in $\mathbb{R}^n, n\ge 2$. Suppose   $\phi:\Omega\to\re$  with $\phi\in W_{loc}^{1,n}(\Omega)$. If  $e^\phi$ is Lipschitz on $\Omega$, then $e^\phi$ is nowhere zero on $ \Omega$.
\end{corollary}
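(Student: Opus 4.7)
The plan is to argue by contradiction and apply Theorem \ref{t1} to the Lipschitz function $u := e^\phi$. Suppose toward a contradiction that there exists $x_0 \in \Omega$ with $u(x_0) = 0$. I will use $u$ to denote the (unique) continuous Lipschitz representative throughout.

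First I would observe that, since $\phi \in W^{1,n}_{loc}(\Omega)$ is finite a.e., the identity $u = e^\phi$ a.e.\ forces $u > 0$ a.e.\ on $\Omega$; hence the closed set $Z_u = \{u=0\}$ has empty interior, and so $x_0 \in \partial Z_u \cap \Omega$. This puts us in the setting of Theorem \ref{t1} (or more precisely Remark \ref{re}): applied to the Lipschitz function $u$ with $a = x_0$, it yields
\begin{equation*}
\log|u(x) - u(x_0)| = \log u(x) \notin W^{1,n}_{loc}(\Omega \setminus Z_u),
\end{equation*}
where we used $u(x_0) = 0$ and $u \ge 0$.

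On the other hand, the relation $u(x) = e^{\phi(x)}$ a.e.\ on $\Omega$ gives $\phi(x) = \log u(x)$ a.e.\ on the open set $\Omega \setminus Z_u$. Since $\phi \in W^{1,n}_{loc}(\Omega)$ restricts to an element of $W^{1,n}_{loc}(\Omega \setminus Z_u)$, we conclude that $\log u \in W^{1,n}_{loc}(\Omega \setminus Z_u)$, contradicting the previous displayed non-membership. Therefore $Z_u = \emptyset$, i.e., $e^\phi$ is nowhere zero on $\Omega$.

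The proof is genuinely short once Theorem \ref{t1} is in hand; the only technical point that deserves a moment of care is the identification of $\phi$ with $\log u$ as Sobolev functions on $\Omega \setminus Z_u$. This is where the structural hypothesis $u = e^\phi$ (a pointwise-a.e.\ equality of measurable functions) is converted into a genuine equality of $W^{1,n}_{loc}$ equivalence classes on the open subset $\Omega \setminus Z_u$, which is precisely what makes the contradiction with Theorem \ref{t1} go through. No new estimates are needed beyond what has already been established.
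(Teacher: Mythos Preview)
Your proof is correct. Both your argument and the paper's rest on Theorem \ref{t1}, but you apply it directly to $u=e^{\phi}$, using that $\log u=\phi$ a.e.\ on $\Omega\setminus Z_u$ already lies in $W^{1,n}_{loc}$; the paper instead first proves the elementary inequality $|e^{-|\phi(x_2)|}-e^{-|\phi(x_1)|}|\le|e^{\phi(x_1)}-e^{\phi(x_2)}|$ to transfer the Lipschitz hypothesis to $e^{-|\phi|}$, and then invokes Proposition \ref{snf}(2) (whose proof is exactly your argument with $|\phi|$ in place of $\phi$). Your route is therefore shorter and avoids the detour through $e^{-|\phi|}$, at the small cost of spelling out why $Z_u$ has empty interior and why $\phi$ and $\log u$ agree as $W^{1,n}_{loc}$ classes on $\Omega\setminus Z_u$ --- points you handle correctly. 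Note incidentally that you could bypass the contradiction setup entirely by quoting the ``In particular'' clause of Theorem \ref{t1}: since $\log|u|=\phi\in W^{1,n}_{loc}(\Omega\setminus Z_u)$, that clause gives $Z_u=\emptyset$ immediately.
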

\begin{proof}
  It is not hard to verify that for all $x_1, x_2\in \Omega$,
 $$\left|e^{-|\phi(x_2)|} - e^{-|\phi(x_1)|}\right|\le |e^{ \phi(x_1) } - e^{ \phi(x_2)}|.  $$
In particular, $e^{-|\phi|}$ is Lipschitz whenever $e^{\phi}$ is so. Applying  Proposition \ref{snf} part {\it (2)}, we have   $e^{-|\phi|}$, and thus $e^{\phi}$, is nowhere zero on $\Omega$.
\end{proof}

\section{In relation to $\bar\partial$}

On domains in $\mathbb C^n$,  if the gradient operator $\nabla$ is replaced by the $\bar\partial$ operator, then Theorem \ref{thm1.1} fails, even for real analytic functions. In fact, there are  real analytic functions that vanish to any given order at one point and satisfy $ |\bar\partial u| \le V |u|$ for some $V\in L^\infty$.

\begin{example}\label{ex5}
Let $f$ be a holomorphic function on $B_1\subseteq\mathbb C^n$ that vanishes to order $k$ at $0$, $k\ge 1$. Letting $u(z) =\left(1+\frac{\bar z_1}{2}\right)f(z)$, then $u$ is real analytic on $B_1$, vanishes to order $k$ at $0$ and  satisfies $|\bar\partial u| \le 4 |u|$.
\end{example}

On the other hand,    since $ |\nabla u|^2= |\partial u|^2+|\bar\partial u|^2  $ for a Lipschitz  $u$, by Theorem \ref{thm1.1} we have  near any neighborhood $U$ of a zero point in $\partial Z_u$ of $u$,
$$ \int_U\frac{|\nabla u(z)|^2}{|u(z)|^2}\ dv = \int_U\frac{|\bar\partial u(z)|^2}{|u(z)|^2} + \frac{| \partial u(z)|^2}{|u(z)|^2}\ dv =\infty. $$
The following propositions discuss a finer property about the $L^2$ divergence of $\frac{\nabla u}{u}$ concerning the smooth extension of holomorphic functions beyond the boundary. In particular, they exhibit an intrinsic obstruction for  holomorphic functions to be extended smoothly across the boundary. We note  that for smooth functions, the flatness in the $L^2$ sense at a point is equivalent to the vanishing of all jets at that point.

\begin{proposition}\label{pr}
    Let $\Omega$ be a domain in $\mathbb C$ and $z_0\in \partial\Omega$. Let $u$ be a nonconstant holomorphic function on $\Omega$. If $u$ can be extended smoothly across $z_0$, still denoted by $u$, and  $u(z_0) =0$, then there exists a neighborhood $U$ of $z_0$ such that one of the following  holds.
    \begin{enumerate}
        \item If $u$ vanishes to finite order at $z_0$, then
        \begin{equation}\label{jj}
            \int_U\frac{|\bar\partial u(z)|^2}{|u(z)|^2}\ dv <\infty \ \ \text{and}\ \ \int_U \frac{| \partial u(z)|^2}{|u(z)|^2}\ dv =\infty.
        \end{equation}

        \item If $u$ vanishes to infinite order at $z_0$, then
        \begin{equation}\label{pp}
            \int_U\frac{|\bar\partial u(z)|^2}{|u(z)|^2}\ dv =\infty.
        \end{equation}
    \end{enumerate}
\end{proposition}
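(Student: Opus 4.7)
The plan is to exploit the interplay between the holomorphicity of $u$ on $\Omega$ and the smoothness of its extension across the boundary point $z_0$. First I would establish a key structural observation: on a neighborhood $V$ of $z_0$ where $u$ is smooth, $\bar\partial u \equiv 0$ on the nonempty open set $V\cap\Omega$, and more generally $\partial_z^j\partial_{\bar z}^k u \equiv 0$ on $V\cap\Omega$ for every $k\geq 1$. Since these derivatives are continuous on $V$ and $z_0\in\overline{V\cap\Omega}$, taking limits from within $\Omega$ forces $\partial_z^j\partial_{\bar z}^k u(z_0)=0$ for every $k\geq 1$. Hence the formal Taylor series of the smooth extension at $z_0$ is a holomorphic power series, and in particular $\bar\partial u$ is flat at $z_0$, that is, $|\bar\partial u(z)| = O(|z-z_0|^N)$ for every $N\geq 0$.

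For part (1), writing the smooth vanishing order at $z_0$ as $m$, the holomorphic Taylor structure gives $u(z) = c_m(z-z_0)^m + O(|z-z_0|^{m+1})$ with $c_m\neq 0$, so in a small neighborhood $U$ of $z_0$ one has $|u(z)|\gtrsim |z-z_0|^m$ and $|\partial u(z)|\gtrsim |z-z_0|^{m-1}$. Combining these with the flatness of $\bar\partial u$ (choosing $N\geq m$), I would obtain
\begin{equation*}
\int_U \frac{|\bar\partial u|^2}{|u|^2}\,dv \lesssim \int_U |z-z_0|^{2N-2m}\,dv < \infty,
\end{equation*}
while the lower bound on $|\partial u|$ gives $\int_U \frac{|\partial u|^2}{|u|^2}\,dv \gtrsim \int_U |z-z_0|^{-2}\,dv$, which is logarithmically divergent in real dimension two. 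This establishes \eqref{jj}.

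For part (2), flatness of $u$ at $z_0$ immediately yields vanishing to infinite order in the $L^2$ sense, since $|u(z)|=O(|z-z_0|^N)$ for every $N$. The plan is then to prove \eqref{pp} by contradiction via Proposition \ref{pz}. Assuming the integral were finite, set $\tilde V = |\bar\partial u|/|u|$ on $\{u\neq 0\}\cap U$ and $\tilde V=0$ elsewhere, so that $\tilde V\in L^2(U)$. To verify $|\bar\partial u|\leq \tilde V|u|$ a.e., I would split the zero set of $u$ into the regular part $\{u=0,\,\nabla u\neq 0\}$, which is locally a smooth curve by the implicit function theorem and hence has two-dimensional measure zero, and the critical part $\{u=0,\,\nabla u=0\}$, on which $\bar\partial u=0$ automatically. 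Proposition \ref{pz} then forces $u\equiv 0$ on $U$, contradicting the fact that the nonconstant holomorphic function $u$ does not vanish identically on the nonempty open set $U\cap\Omega$ by the identity principle.

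The main obstacle is the structural first step: recognizing that the smoothness of the extension combined with holomorphicity inside $\Omega$ completely determines the Taylor series at the boundary point $z_0$ as a holomorphic one. Once this is in place, Case (1) becomes a direct calculation and Case (2) reduces transparently to Proposition \ref{pz}; the handling of the zero set in Case (2) is routine given the dichotomy between regular and critical zeros supplied by the implicit function theorem.
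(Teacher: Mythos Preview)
Your proposal is correct and follows essentially the same route as the paper: both parts rest on the observation that the Taylor series of the smooth extension at $z_0$ contains no $\bar z$ terms, leading in Case~(1) to the pointwise estimates $|\bar\partial u|/|u|=O(1)$ and $|\partial u|/|u|\sim |z-z_0|^{-1}$, and in Case~(2) to a contradiction via Proposition~\ref{pz}. Your handling of the zero set of $u$ in Case~(2) is in fact more careful than the paper's, which simply asserts $\bar\partial u = Vu$ on all of $U$ without comment.
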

\begin{proof}
   Without loss of generality let $z_0=0$.  In  {\it (1)}, since  $u$ vanishes to finite order at $0$ and is holomorphic on $\Omega$, $u = cz^k +O(|z|^{k+1})$ near $0$ for some constant $c\ne 0, k\in \mathbb Z^+$. With  a direct computation we have
   \begin{equation}\label{eqa}
        \frac{| \bar\partial u| }{|u| } = \frac{O(z^k)}{|cz^k+ O(|z|^{k+1})|}  =O(1)  \ \ \text{and} \ \ \frac{| \partial u| }{|u| } = \frac{k}{|z|} + O (1), 
   \end{equation}
   from which \eqref{jj} follows.

   For {\it (2)}, if not, then set  $V = \frac{\bar\partial u}{u}$ where  $u\ne 0$, and $V=0$ otherwise on $U$, so that $V\in L^2(U)$ and $\bar\partial u = Vu$ on $U$. According to Theorem \ref{pz}, since $u$ is flat at $z_0$, we  have $u\equiv 0$ on $U$. In particular, $u=0$ on the open set $U\cap \Omega$. By the holomorphic property of $u$ on $\Omega$, we further have $u\equiv 0$ on $\Omega$. This contradicts the assumption that $u$ is nonconstant on $\Omega$.
\end{proof}

%The following is an immediate consequence of Proposition \ref{pr}.
The following two corollaries give alternative characterizations on the vanishing order of smooth extension of holomorphic functions across the boundary. 

\begin{corollary}\label{cor1}
     Let $\Omega$ be a domain in $\mathbb C$ and $z_0\in \partial\Omega$. Let $u$ be a nonconstant holomorphic function on $\Omega$, and extend smoothly across $z_0$, still denoted by $u$, with $u(z_0) =0$.  Then the following statements are equivalent to each other.
     \begin{enumerate}
         \item $u$ vanishes to finite order at $z_0$.
         \item   $    \frac{  |\bar\partial u|}{|u|}\in L^\infty$ near $z_0$.
    \item    $  \frac{        |\bar\partial u|}{|u|}\in L^2        $ near $z_0$.
     \end{enumerate}
\end{corollary}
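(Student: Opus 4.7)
The plan is to establish the three implications $(1) \Rightarrow (2) \Rightarrow (3) \Rightarrow (1)$, essentially as immediate consequences of Proposition \ref{pr}, since all the analytic work has already been carried out there.

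First, for $(1) \Rightarrow (2)$, I would simply invoke the local computation \eqref{eqa} from the proof of Proposition \ref{pr}. If $u$ vanishes to finite order $k$ at $z_0$, then near $z_0$ one has $u = cz^k + O(|z|^{k+1})$ with $c \neq 0$, and $\bar\partial u = O(|z|^k)$ since $u$ extends smoothly and its holomorphic Taylor part is killed by $\bar\partial$. This gives $|\bar\partial u|/|u| = O(1)$ in a small enough neighborhood $U$ of $z_0$, which is precisely $(2)$.

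The implication $(2) \Rightarrow (3)$ is immediate after shrinking $U$ to a bounded neighborhood, since $L^\infty(U) \subset L^2(U)$ when $|U| < \infty$.

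Finally, for $(3) \Rightarrow (1)$, I would argue by contraposition. Assume $u$ vanishes to infinite order at $z_0$. Then part $(2)$ of Proposition \ref{pr} yields
\[
\int_U \frac{|\bar\partial u(z)|^2}{|u(z)|^2}\, dv = \infty
\]
for every neighborhood $U$ of $z_0$, contradicting statement $(3)$. Hence $(3)$ forces $u$ to vanish to finite order.

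There is no real obstacle here since the substantive analytic content is packaged in Proposition \ref{pr}: the finite-order case is handled by the direct estimate \eqref{eqa}, and the infinite-order case is handled by the application of the unique continuation result of \cite{PZ} (Proposition \ref{pz}) already performed in the proof of Proposition \ref{pr}. The corollary therefore amounts to rearranging these statements into a chain of equivalences.
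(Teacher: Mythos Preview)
Your proof is correct and matches the paper's own argument essentially line for line: the paper also uses \eqref{eqa} for $(1)\Rightarrow(2)$, notes that $(2)\Rightarrow(3)$ is trivial, and obtains $(3)\Rightarrow(1)$ (stated as $(3)\Leftrightarrow(1)$) directly from Proposition~\ref{pr}.
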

\begin{corollary}\label{cor2}
     Let $\Omega$ be a domain in $\mathbb C$ and $z_0\in \partial\Omega$. Let $u$ be a nonconstant holomorphic function on $\Omega$, and extend smoothly across $z_0$, still denoted by $u$, with $u(z_0) =0$.  Then the following statements are equivalent to each other.
     \begin{enumerate}
         \item $u$ vanishes to infinite order at $z_0$.
         \item       $    \frac{        |\bar\partial u|}{|u|}\notin L^\infty$ near $z_0$.    
    \item       $  \frac{        |\bar\partial u|}{|u|}\notin L^2        $ near $z_0$.
     \end{enumerate}
\end{corollary}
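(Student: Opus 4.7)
The plan is to view Corollary \ref{cor2} as the mirror image of Corollary \ref{cor1}: its three clauses are precisely the negations of the three clauses there. Since $u$ is smooth at $z_0$ with $u(z_0)=0$, it must vanish either to finite or to infinite order at $z_0$, and these two alternatives are exhaustive and disjoint. Thus each implication in Corollary \ref{cor2} will correspond, under contrapositive, to the opposite implication in Corollary \ref{cor1}. Rather than invoke Corollary \ref{cor1} (which is itself just a restatement of Proposition \ref{pr}), I would prove Corollary \ref{cor2} directly from Proposition \ref{pr} by establishing the cycle $(1) \Rightarrow (3) \Rightarrow (2) \Rightarrow (1)$.

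For $(1) \Rightarrow (3)$, I would appeal directly to Proposition \ref{pr}(2): if $u$ vanishes to infinite order at $z_0$, then on some neighborhood $U$ of $z_0$ we already have $\int_U |\bar\partial u|^2/|u|^2\,dv = \infty$, and hence $|\bar\partial u|/|u| \notin L^2$ near $z_0$. For $(3) \Rightarrow (2)$, I would use the purely measure-theoretic inclusion $L^\infty(U) \subset L^2(U)$ on any bounded neighborhood $U$ of $z_0$: if $|\bar\partial u|/|u|$ were bounded on a sufficiently small ball around $z_0$, it would automatically be square-integrable on that ball, so contrapositively $(3)$ forces $(2)$.

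For $(2) \Rightarrow (1)$, I would argue by contrapositive using the finite-order case of Proposition \ref{pr}(1). If $u$ vanished only to finite order $k$ at $z_0$, then the local expansion $u=c z^k + O(|z|^{k+1})$ used in the proof of Proposition \ref{pr}(1) forces $\bar\partial u = O(|z|^k)$, whence $|\bar\partial u|/|u| = O(1)$ as $z \to z_0$; in particular the quotient is bounded on some neighborhood of $z_0$, contradicting $(2)$. I do not anticipate any serious obstacle here, as the substantive analytic content was already carried out in Proposition \ref{pr} (ultimately relying on the unique continuation result of \cite{PZ} for the $L^2$ divergence of $|\bar\partial u|/|u|$ in the infinite-order case). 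The present corollary amounts to a careful bookkeeping of negations combined with the standard local inclusion of $L^\infty$ into $L^2$ on bounded sets.
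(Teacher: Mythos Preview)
Your proposal is correct and follows essentially the same approach as the paper: the paper simply states that Corollary \ref{cor2} ``can be proved similarly'' to Corollary \ref{cor1}, whose proof in turn consists of exactly the ingredients you identify---the trivial $L^\infty\subset L^2$ inclusion, the $L^2$ divergence from Proposition \ref{pr}(2), and the $O(1)$ bound from \eqref{eqa} in the finite-order case. Your explicit cycle $(1)\Rightarrow(3)\Rightarrow(2)\Rightarrow(1)$ is just a more detailed unpacking of what the paper leaves implicit.
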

\begin{proof}[Proof of Corollary \ref{cor1} and \ref{cor2}:]
 For Corollary \ref{cor1},   {\it (2)} $\Rightarrow$  {\it (3)} is trivial.  {\it (3)} $\Leftrightarrow $ {\it (1)} is a direct consequence of Proposition  \ref{pr}. {\it (1)} $\Rightarrow$  {\it (2)} follows from \eqref{eqa} in the proof of Proposition  \ref{pr}.  Corollary \ref{cor2} can be proved similarly.
\end{proof}

%Proposition \ref{pr} can be applied to the following example.

\begin{example}
Let $\mathbb H^+$ be the upper half plane in $\mathbb C$. The function $$u = \exp\left({\displaystyle{\frac{1}{i\sqrt{iz}}}}\right), \ \ \ \arg iz\in (\frac{\pi}{2}, \frac{3\pi}{2}),$$ is holomorphic on $\mathbb H^+$ and vanishes to infinite order at $z_0=0$. It allows for a smooth extension across $0$.  By Proposition \ref{pr} {\it (2)}, every smooth extension of $u$ on a neighborhood $U$ of $0$ should satisfy \eqref{pp}. Note that $u$ cannot  extend holomorphically across $0$.

For every $k\ge 1$, the function $u = z^k $ is holomorphic on $\mathbb H^+$  and vanishes to finite order $k$ at $0$. By Proposition \ref{pr} {\it (1)}, every smooth extension of $u$ on a neighborhood $U$ of $0$ should satisfy \eqref{jj}. For a less trivial example  towards Proposition \ref{pr} {\it (1)}   without holomorphic extension across $0$, one can consider  $u = z^k + e^{\frac{1}{i\sqrt{iz}}}$ on $\mathbb H^+$   instead, and obtain \eqref{jj} for every smooth extension of $u$ across $0$.
\end{example}

\begin{proposition}\label{prn}
    Let $\Omega$ be a domain in $\mathbb C^n$ and $z_0\in \partial\Omega$. Let $u$ be a nonconstant holomorphic function on $\Omega$. If $u$ can be extended smoothly across $z_0$, still denoted by $u$, and  $u(z_0) =0$, then there exists a neighborhood $U$ of $z_0$ such that one of the following  holds.
    \begin{enumerate}
        \item If $u$ vanishes to finite order at $z_0$, then there exists a complex line $L$ passing through $z_0$ such that
        \begin{equation}\label{kk}
               \int_{U\cap L} \frac{| \partial u(z)|^2}{|u(z)|^2}\ dv =\infty.
        \end{equation}

        \item If $u$ vanishes to infinite order at $z_0$, then for every complex line $L$ passing through $z_0$, 
        \begin{equation}\label{mm}
            \int_{U\cap L}\frac{|\bar\partial u(z)|^2}{|u(z)|^2}\ dv =\infty.
        \end{equation}
    \end{enumerate}
\end{proposition}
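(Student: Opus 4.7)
The plan is to reduce both parts to the one-complex-variable Proposition \ref{pr} by restricting $u$ to an appropriately chosen complex line through $z_0$. The preliminary observation I would exploit is that, because $u$ is holomorphic on $\Omega$ and extends smoothly to $z_0\in\partial\Omega$, every continuous mixed partial $\partial^\alpha\bar\partial^\beta u$ with $|\beta|\ge 1$ vanishes on $\Omega$ and hence at $z_0$. Consequently the Taylor polynomials of $u$ at $z_0$ are pure holomorphic polynomials in $z-z_0$.

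For Part (1), assume without loss of generality that $z_0=0$. Finite-order vanishing yields a nonzero homogeneous holomorphic polynomial $P$ of some degree $k\ge 1$ as the lowest-order Taylor piece, so $u(z)=P(z)+O(|z|^{k+1})$ near $0$. I would choose a unit vector $w\in\mathbb C^n$ with $P(w)\ne 0$, and set $L=\{\zeta w:\zeta\in\mathbb C\}$. Homogeneity gives $u(\zeta w)=\zeta^kP(w)+O(|\zeta|^{k+1})$, hence $|u(\zeta w)|^2\le C_1|\zeta|^{2k}$ for $|\zeta|$ small. Euler's identity $w\cdot\partial P(w)=kP(w)\ne 0$ forces $\partial P(w)\ne 0$ as a vector, and the homogeneity of its components yields $|\partial u(\zeta w)|^2\ge C_2|\zeta|^{2(k-1)}$ on a smaller punctured disk. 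Therefore $|\partial u|^2/|u|^2\ge c|\zeta|^{-2}$ along $L$ near $0$, and its integral against the two-dimensional Lebesgue measure on $L$ diverges logarithmically.

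For Part (2), fix any complex line $L=\{\zeta w\}$ with $|w|=1$, and set $\varphi(\zeta):=u(\zeta w)$. Then $\varphi$ is smooth on $U\cap L$, holomorphic on $L\cap\Omega$ as the restriction of a holomorphic function, and flat at $\zeta=0$ since all derivatives of $u$ vanish at $0$. In the main case, where $\varphi$ is nonconstant on some connected component $D$ of $L\cap\Omega$ having $0\in\partial D$, Proposition \ref{pr}(2) applied to $\varphi:D\to\mathbb C$ (smoothly extended across $0$) produces a neighborhood $U'$ of $0$ in $L$ with $\int_{U'}|\bar\partial_\zeta\varphi|^2/|\varphi|^2\, dv = \infty$. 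The chain rule gives $\bar\partial_\zeta\varphi(\zeta)=\sum_j\bar w_j\,\partial_{\bar z_j}u(\zeta w)$, and Cauchy--Schwarz with $|w|=1$ yields $|\bar\partial_\zeta\varphi|^2\le|\bar\partial u|^2$. Consequently,
\[
\int_{U\cap L}\frac{|\bar\partial u|^2}{|u|^2}\, dv\;\ge\;\int_{U'}\frac{|\bar\partial_\zeta\varphi|^2}{|\varphi|^2}\, dv\;=\;\infty.
\]

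The main obstacle in Part (2) is the handling of degenerate lines $L$ for which $\varphi\equiv 0$ on every component of $L\cap\Omega$ adjacent to $0$, or for which $L\cap\Omega$ is empty in a neighborhood of $z_0$ (for example when $L$ lies in the complex tangent space to $\partial\Omega$ at $z_0$). In such situations $u$ vanishes on a set of positive two-dimensional Lebesgue measure inside $U\cap L$, and the conclusion must be interpreted via the convention, consistent with the paper's usage in Corollaries \ref{cor1}--\ref{cor2}, that $|\bar\partial u|^2/|u|^2$ takes value $+\infty$ on the zero set of $u$.
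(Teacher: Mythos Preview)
Your approach matches the paper's: restrict to a complex line through $z_0$ and invoke the one-variable theory. For Part~(1) the paper normalizes coordinates so that $u|_{\{z_2=\cdots=z_n=0\}}=z_1^k+(\text{flat})$ via a Weierstrass-type decomposition, whereas you pick a direction $w$ with $P(w)\ne0$ and use Euler's identity; the two are equivalent and your version is arguably cleaner. For Part~(2) the paper argues by contradiction directly through Proposition~\ref{pz} rather than through Proposition~\ref{pr}(2): assuming $|\bar\partial u|/|u|\in L^2(U\cap L)$ produces a potential $W\in L^2$ with $\bar\partial_{z_1}v=Wv$ for $v=u|_L$, and since $v$ is flat at $z_0$, unique continuation forces $v\equiv0$ on all of $U\cap L$. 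This bypasses your main/degenerate dichotomy entirely --- there is no need to check that $L\cap\Omega$ meets $U$ or that $\varphi$ is nonconstant on a component there, and the conclusion $v\equiv0$ itself supplies the positive-measure zero set that you assert (but do not justify) in your case where $L\cap\Omega$ misses a neighborhood of $z_0$. That said, the paper's final step ``$v\equiv0\Rightarrow\eqref{mm}$'' rests on exactly the convention you flag (the quotient is $+\infty$ on $Z_u$) and is not spelled out there either, so the residual edge case you identify is shared with the paper's own argument rather than peculiar to yours.
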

\begin{proof}
    For simplicity let $z_0=0$ and $n=2$. The higher dimensional cases can be proved similarly. If  $u$ vanishes to finite order at $0$, then after a holomorphic change of coordinates, there exists some $k\in \mathbb Z^+$ such that $$u = z_1^k + g_{k-1}(z_2)z_1^{k-1}+\cdots +g_0(z_2) + h(z)$$  near $0$. Here for each $j = 0, \ldots, k-1$, $g_j$ is  smooth   on $U$, holomorphic on $\Omega\cap U$ and $g_j(0)=0$, and $h$ is a smooth  function on $U$ with $h=0$ on $\Omega\cap U$. In particular, $h$ is flat at $0$.
    Thus on the complex line $L = \{(z_1, 0)\in \mathbb C^2\}$, we have $u|_{U\cap L} = z_1^k+h(z_1, 0)$ and so
    $$ \frac{| \partial_{z_1} u| }{|u| } = \frac{k}{|z_1|} + O (1). $$
    Hence \eqref{kk} holds.
  %  $$ \int_{U\cap L}\frac{|\bar\partial u|^2}{|u|^2} <\infty??? $$

  Assume  $u$ vanishes to infinite order at $0$ and there exists a complex line $L$ through $0$ such that  $\frac{|\bar\partial u| }{|u| }\in L^2(U\cap L )$. Applying a holomorphic change of coordinates if necessary, one can always write $L = \{(z_1,  0)\in \mathbb C^2\}$. Then $v:  = u|_{U\cap L}$  vanishes to infinite order at $0$ and 
$$\frac{|\bar\partial_{z_1} u| }{|u| }\le \frac{|\bar\partial u| }{|u| }\in L^2(U\cap L ).$$ In particular, there exists some $W\in L^2(U\cap L)$ such that   $\bar\partial_{z_1} v =Wv$ on $U\cap L$. By Theorem \ref{pz}, we have $v\equiv 0$. Thus \eqref{mm} holds.
\end{proof}

\begin{proposition}\label{pop}
    Let $\Omega$ be a domain in $\mathbb C^n$ and $z_0\in \partial\Omega$. Let $u$ be a function holomorphic on $\Omega$ and   smooth  on a neighborhood $U\subseteq\mathbb C^n$ of $z_0$. Then one of the following mutually exclusive cases holds.
    \begin{enumerate}
        \item  $u$ is holomorphic on $\Omega\cup U$.
        \item  \begin{equation}\label{e12}
        \int_{U\setminus Z_{\bar\partial u}} \frac{\sum_{j, k=1}^n|\bar\partial^2_{z_jz_k}u(z)|^2}{\sum_{j=1}^n|\bar\partial_{z_j} u(z)|^2} \ dv=\infty,
    \end{equation}
    where  $Z_{\bar\partial u}$ is the zero set  of the vector function $\bar\partial u$. 
    \end{enumerate}
\end{proposition}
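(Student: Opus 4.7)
The plan is a proof by contradiction, reducing to the one-dimensional unique continuation Proposition \ref{pz} via a slicing argument. Let $w = \bar\partial u = (\bar\partial_{z_1} u, \ldots, \bar\partial_{z_n} u)$, viewed as a smooth $\mathbb C^n$-valued function on $U$; then $w \equiv 0$ on the nonempty open set $\Omega \cap U$, and $|\bar\partial w|^2 = \sum_{j,k}|\bar\partial^2_{z_jz_k}u|^2$ while $|w|^2 = \sum_j |\bar\partial_{z_j}u|^2$. The two cases are mutually exclusive, so it suffices to show that when $(1)$ fails, $(2)$ holds. Suppose to the contrary that $w \not\equiv 0$ on $U$ yet the integral in $(2)$ is finite; then $V := |\bar\partial w|/|w|$, extended by $0$ on $Z_w$, lies in $L^2(U)$.

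To localize the non-vanishing of $w$, set $O = \{z \in U : w \equiv 0 \text{ in a neighborhood of } z\}$. This is open in $U$, contains $\Omega \cap U$, and is a proper subset of $U$ by assumption. By connectedness we can pick $z^* \in \partial O \cap U$ and a ball $B \subset U$ centered at $z^*$; since $O$ accumulates at $z^*$ but $z^* \notin O$, there exists $p \in O \cap B$ with $p \ne z^*$. After a translation and a unitary change of coordinates we may take $z^* = 0$ and $p$ to lie on the $z_1$-axis, and foliate $B$ by the complex lines $L_{z'} = \{(z_1, z') : z_1 \in \mathbb C\}$. The set $W' := \pi_1(O \cap B) \subset \mathbb C^{n-1}$ is then open and contains $0$, and every line $L_{z'}$ with $z' \in W'$ meets $O$ in a nonempty open subset of $L_{z'} \cap B$.

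The heart of the proof is a slice-by-slice application of Proposition \ref{pz}. By Fubini, for a.e. $z' \in W'$ the restriction $V|_{L_{z'}}$ lies in $L^2(L_{z'} \cap B)$, and since $w$ is smooth the inequality $|\bar\partial_{z_1}(w|_{L_{z'}})| \le V|_{L_{z'}} \cdot |w|_{L_{z'}}|$ holds a.e.\ on each slice (at a.e.\ point of the zero set of $w|_{L_{z'}}$, both sides vanish by the Lebesgue density theorem applied to the smooth function $w|_{L_{z'}}$). Because $w|_{L_{z'}}$ already vanishes identically on the open piece $L_{z'} \cap O \cap B$, it is flat at every point there, so Proposition \ref{pz}, applied to the smooth $\mathbb C^n$-valued function $w|_{L_{z'} \cap B}$ on the connected disk $L_{z'} \cap B$, forces $w|_{L_{z'}} \equiv 0$ for a.e.\ $z' \in W'$.

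Finally, this full-measure subset of $W'$ is dense in the open set $W'$, so continuity of $w$ yields $w \equiv 0$ on the entire open cylinder $\pi_1^{-1}(W') \cap B$, which is an open neighborhood of $z^* = 0$. Hence $z^* \in O$, contradicting $z^* \in \partial O \setminus O$. The main delicate point is the geometric set-up in the second paragraph: one must align the slicing direction with the line joining $z^*$ to a nearby point $p \in O$, so as to obtain an \emph{open family} of slices that all meet $O$; without this, one could not invoke Proposition \ref{pz} on sufficiently many lines to fill a neighborhood of $z^*$ in $B$.
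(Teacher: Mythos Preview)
Your argument is correct, but it takes a longer route than the paper's own proof. Both proofs set $v=\bar\partial u$ and, assuming (1) fails and the integral in (2) is finite, define the potential $V=|\bar\partial v|/|v|$ (extended by $0$ on $Z_v$) so that $V\in L^2(U)$ and $|\bar\partial v|\le V|v|$ a.e.\ on $U$. At that point the paper simply invokes the \emph{several-variables} weak unique continuation theorem for $\bar\partial$ from \cite[Theorem 1.2]{PZ}: since $v$ vanishes on the open set $\Omega\cap U$, it must vanish identically on $U$, a contradiction. You instead re-derive that weak unique continuation by hand, via a boundary-point argument and complex-line slicing that reduces to the \emph{one-variable} Proposition~\ref{pz}; this is precisely the dimension-reduction strategy the paper uses elsewhere (proof of Theorem~\ref{wucp}), so your method is in the same spirit and has the virtue of relying only on results actually stated in the paper, at the cost of more geometric bookkeeping.

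Two minor remarks on presentation. First, your justification that ``both sides vanish at a.e.\ point of the zero set of $w|_{L_{z'}}$'' is correct, but the phrase ``Lebesgue density theorem'' hides the key point: at a density point of the zero set of a $C^1$ real-valued function the gradient must vanish (otherwise the zero set is locally a hypersurface of measure zero), and applying this to the real and imaginary parts of each $w_j$ gives $\bar\partial_{z_1}w=0$ there. Second, the symbol $\pi_1$ for the projection forgetting the \emph{first} coordinate is potentially confusing; saying explicitly ``projection onto the last $n-1$ coordinates'' would be clearer.
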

\begin{proof}
Suppose $u$ is not holomorphic on $\Omega\cup U$ and \eqref{e12} fails. Then  $Z_{\bar\partial u}\cap U\ne U$ and the function
  \begin{equation*}\label{lam}
     W= \begin{cases}
 \sqrt{\frac{\sum_{j, k=1}^n|\bar\partial^2_{z_jz_k}u|^2}{ \sum_{j=1}^n|\bar\partial_{z_j} u|^2}}, &\text{on}\ \ U\setminus Z_{\bar\partial u}; \\
0, &\text{on}\ \ Z_{\bar\partial u}
\end{cases} 
\end{equation*}
 belongs to $L^2_{loc}(U)$. 
  Let $v= (\bar\partial_{z_1} u, \ldots, \bar\partial_{z_n} u)$. Then $v: U\rightarrow \mathbb C^n$ satisfies $|\bar\partial v| =W|v|$ on $U$ and vanishes on the nonempty open set $U\cap \Omega$. According to the weak unique continuation property in \cite[Theorem 1.2]{PZ}, we have $v\equiv 0$ on $U$, contradicting the assumption that $Z_{\bar\partial u}\cap U\ne U$.
\end{proof}

We point out that in the case when $\Omega$ is pseudoconvex  with smooth boundary, there always exists a  function which is  holomorphic  on $\Omega$ and smooth on $\overline\Omega$, but  does not extend  holomorphically across a boundary point $z_0$. Thus  for every smooth extension of this function, part {\it (2)} of Proposition \ref{pop} always occurs.

\begin{remark}
While all the propositions and corollaries in this section are formulated for holomorphic functions with smooth extension across a boundary point, the same reasoning and conclusions can be extended without effort to more general settings, including formally holomorphic functions -- smooth functions where the Taylor expansion at that point does not contain $\bar z$ terms. See \cite%[Definition 5.5]
{FP} for more discussion on formally holomorphic functions. 
\end{remark}

\subsubsection*{Acknowledgments}
  Part of this work by the second named author was conducted while on
  sabbatical leave visiting Huaqiao university in China in Spring 2024.  He thanks Jianfei Wang for his invitation, and 
  the host institution for its hospitality and excellent research
  environment.
  
  We also acknowledge the helpful comments of the reviewers, whose suggestions improved the writing and results of Sections \ref{sec3} and \ref{sec5}.


\begin{thebibliography}{PW}

\bibitem[E]{Ev}{\sc L.\  Evans}, \emph{Partial Differential Equations}, Second edition. Graduate Studies in Mathematics {\bf19}. American Mathematical Society, 2010.% xxii+749 pp.

\bibitem[FP]{FP} {\sc M. \ Fassina} and {\sc Y.\   Pan}, \emph{A local obstruction for elliptic operators with real analytic coefficients on flat germs}, Ann.\ Fac.\ Sci.\ Toulouse Math.\ S\'erie 6 (5) {\bf 31} (2022), 1343--1363.

\bibitem[H]{H}  {\sc J.\ Heinonen},  \emph{Lectures on Analysis on Metric Spaces}, Springer-Verlag, 2001.

\bibitem[PW]{PW}{\sc Y.\ Pan} and {\sc M.\ Wang},  \emph{Uniqueness for  $n$-th order differential systems with strong singularities},
Electron.\ J.\ Differential Equations (2010), No.\ 172, 9 pp.

\bibitem[PZ]{PZ} {\sc Y.\ Pan} and {\sc Y.\ Zhang}, \emph{Unique continuation for $\bar\partial$ with square-integrable potentials},  New York Journal of Mathematics {\bf 29} (2023), 402--416.

\bibitem[R]{rudin} {\sc W.\ Rudin}, \emph{Real and Complex Analysis}, Third edition. McGraw-Hill, 1987.

\bibitem[W]{W34} {\sc H.\ Whitney}, \emph{Analytic extensions of differentiable functions defined in closed sets}, {  Trans.\ Amer.\ Math.\ Soc.} (1) {\bf 36} (1934), 63--89.


\end{thebibliography}
\end{document}